\newtheorem{Definition}{Definition}
\newtheorem{theorem}{Theorem}
\newtheorem{Remark}{Remark}
\newtheorem{pro}{Proposition}
\newtheorem{lem}{Lemma}
\newtheorem{cor}{Corollary}
\newcommand{\R}{\mathbb{R}}
\newcommand{\Z}{\mathbb{Z}}
\newcommand{\ol}{\overline}
\renewcommand{\cal}{\mathcal}
\renewcommand{\(}{\left (}
\renewcommand{\)}{\right )}
\newcommand{\1}{\mathbf{1}}
\newcommand{\rt}{{\rm rot}}
\newcommand{\rf}{{\rm ref}}
\newcommand{\GL}{{\rm GL}}
\renewcommand{\O}{{\rm O}}
\newcommand{\net}{{\rm Net}}
\newcommand{\0}{\mathbf{0}}
\tikzset{%
  >=latex, 
  inner sep=0pt,%
  outer sep=2pt,%
  mark coordinate/.style={inner sep=0pt,outer sep=0pt,minimum size=3pt,
    fill=black,circle}%
}
\title{\LARGE \bf
Voltage Graphs and Cluster Consensus with Point Group Symmetries}
\author{Xudong Chen, M.-A. Belabbas, Tamer Ba\c sar
\thanks{Xudong Chen, M.-A. Belabbas, Tamer Ba\c sar are with the Coordinated Science Laboratory, University of Illinois at Urbana-Champaign, emails: \{xdchen, belabbas, basar1\}@illinois.edu. }}
\begin{document}

\maketitle
\thispagestyle{empty}
\pagestyle{empty}

\begin{abstract}
A cluster consensus system is a multi-agent system in which the autonomous agents communicate to form multiple clusters, with each cluster of agents asymptotically converging to the same clustering point. 
We introduce in this paper a special class of cluster consensus dynamics, termed the {\em $G$-clustering dynamics} for $G$ a point group, whereby the autonomous agents can form as many as $|G|$ clusters, and moreover,  the associated $|G|$ clustering points exhibit a geometric symmetry induced by the point group. The definition of a $G$-clustering dynamics relies on the use of the so-called voltage graph~\cite{gross1987topological}. We recall that a $G$-voltage graph is comprised of two elements---one is a directed graph (digraph), and the other is a map  assigning elements of a group~$G$ to the edges of the digraph. For example, in the case when $G = \{1, -1\}$, i.e., a cyclic group of order~$2$, a voltage graph is nothing but a signed graph. A $G$-clustering dynamics can then be viewed as a generalization of the so-called Altafini's model~\cite{altafini2012dynamics,altafini2013consensus}, which was originally defined over a signed graph, by defining the dynamics over a voltage graph. One of the main contributions of this paper is to identify a necessary and sufficient condition for the exponential convergence of a $G$-clustering dynamics. Various properties of voltage graphs that are necessary for establishing the convergence result are also investigated, some of which might be of independent interest in topological graph theory. 
\end{abstract}

\section{Introduction}
A cluster consensus (or group consensus) system is a multi-agent system in which the autonomous agents communicate to form multiple clusters, with each cluster of agents asymptotically converging to the same clustering point. 
Motivated by what is seen in nature and the hopes for manmade systems, there has been many efforts in modeling a clustering consensus system whereby local interactions among the agents can lead to a desired global behavior of the ensemble system. Often, the choice of such a model is some kind of diffusive network dynamics, possibly with a set of  external inputs injected into the evolution equations of certain individual agents that are chosen from different clusters.  We refer to~\cite{blondel2009krause,yu2010group,xia2011clustering,han2013cluster,shang2013l1} as typical examples of such cluster consensus system.                 



In this paper, we introduce a class of cluster consensus dynamics, termed the {\em $G$-clustering dynamics} for $G$ a point group, along which the $N$~autonomous agents can   form as many as $|G|$ clusters without any external input. Moreover,  the associated $|G|$ clustering points exhibit will a geometric symmetry induced by the point group~$G$. To elaborate a bit, we consider, for example, the case where $G$ is a cyclic group of order~$n$, generated by a single rotation matrix $\theta \in \R^{2\times 2}$ with $\theta ^n = I$. Then, an associated $G$-clustering dynamics partitions the agents into $n$ clusters, with the clustering points being the vertices of an~$n$-sided regular polygon. 

To introduce a $G$-clustering dynamics, we first recall the definition of a {\bf voltage graph}. In topological graph theory~\cite{gross1987topological}, a $G$-voltage graph  is defined to be a pair $(\Gamma, \rho)$, with $\Gamma = (V, E)$  a directed graph and $\rho: E\longrightarrow G$ a map from the edge set $E$ to a finite group $G$ (not necessarily a point group). The values of the map $\rho$ are said to be the {\bf voltages}, and the group $G$ is said to be the {\bf voltage group} associated with $(\Gamma, \rho)$. 
We note here that a voltage graph $(\Gamma, \rho)$ is also known as a {\it gain graph}, and this terminology is more often used in bias graph theory~\cite{zaslavsky1989biased} and matroid theory~\cite{zaslavsky1991biased}.    
We also note that in the case $G = \Z/(2) = \{1, -1\}$, i.e., the cyclic group of order~$2$, a voltage graph $(\Gamma, \rho)$ can be viewed as a signed graph~\cite{cartwright1956structural}, with $1$ and $-1$ representing the plus- and minus-sign, respectively.   
With a $G$-voltage graph at hand, we describe below the $G$-clustering dynamics. 
  
\vspace{3pt}
\noindent
{\bf The $G$-clustering dynamics.} To this end, let $\Gamma = (V, E)$ be a directed graph (or in short, {\it digraph}) of $N$ vertices, with $V = \{v_1,\ldots, v_N\}$ the set of vertices and $E$ the set of edges. We denote by $v_{i} \to v_j$ (or simply $e_{ij}$) an edge of $\Gamma$ from~$v_i$ to~$v_j$; we say that $v_i$ is an {\bf in-neighbor} of~$v_j$, and $v_j$ is an {\bf out-neighbor} of~$v_i$. For a vertex $v_i\in V$, let $\cal{N}^+(v_i)$ and $\cal{N}^-(v_i)$ be the sets of in- and out-neighbors of vertex~$v_i$, respectively.  Now, consider a multi-agent system of $N$ agents. 
Following the standard convention,  we assign to each vertex~$v_i $ of $\Gamma$ an agent $x_i \in \R^k$, and let the edges of $\Gamma$ represent the information flow. 
For a set of agents $x_1, \ldots, x_N$ in $\R^k$, set $$p:= (x_1,\ldots, x_N) \in \R^{kN}.$$ We call $p$ a {\it configuration}, and $P := \R^{kN}$ the {\it configuration space}.  
Let  $(\Gamma, \rho)$ be a voltage graph, with the voltage group~$G$ being a point group in dimension~$k$. For ease of notation, let   
$
\theta_{ij}:= \rho(e_{ij}) 
$. The {\bf $G$-clustering dynamics} of a configuration  $p = (x_1, \ldots, x_N)$ is then given by
\begin{equation}\label{eq:clusteringdynamics}
 \dot x_i = \sum_{v_j\in \cal{N}^-(v_i) }a_{ij} \, (\theta_{ij} \, x_j - x_i), \hspace{10pt} \forall i = 1,\ldots, N, 
\end{equation}
where the $a_{ij}$'s are positive constants. Note that a $G$-clustering dynamics does not require an external input. We also note that the dynamics of each agent~$x_i$ depends only on its local information, i.e., the positions of its out-neighbors $x_j$ and the associated voltages $\theta_{ij}$,  for $v_j \in \cal{N}^-(v_i)$. In particular, it does not require a global labeling of the agents that belong to the same cluster.   
\vspace{3pt}

A $G$-clustering dynamics can be viewed as a straightforward generalization of the Altafini's model~\cite{altafini2012dynamics,altafini2013consensus}; indeed,   
if each $x_i$, for $v_i \in V$, is a scalar, and $G = \{1,-1\}$, which is the (unique) nontrivial point group in dimension one, then system~\eqref{eq:clusteringdynamics} is reduced to the  standard Altafini's model. Signed graphs  have been widely used in social science: naturally the edges of a signed graph, with the labeling of plus/minus signs, can be used to model the relationships between pairs of interacting agents; specifically, an positive (resp. negative) edge of a signed graph means a friendship (resp. an antagonism) between a pair of neighboring agents. This then leads to an application of the Altafini's model in  opinion dynamics.    
Specifically, it has been shown in~\cite{altafini2013consensus} that  if $\Gamma$ is strongly connected and the associated signed graph $(\Gamma, \rho)$ is structurally balanced (the notion of structural balance is originally defined for signed graphs~\cite{cartwright1956structural},  a generalized definition for voltage graphs will be given in Subsection~\ref{ssec:preliminary}), then the $N$ scalars $x_1,\ldots, x_N$ evolve, along the dynamics~\eqref{eq:clusteringdynamics},  to form two clusters, with the pair of clustering points being the opposite of each other. On the other hand, if  $(\Gamma, \rho)$ is structurally unbalanced, then all the scalars $x_1,\ldots, x_N$ converge to zero. We further refer to~\cite{proskurnikov2014consensus,xia2015structural,Ji2015cdc} for analyses of convergence of  the  Altafini's models with time-varying network topologies.         




We extend in this paper the result about convergence of the Altafini's model to a $G$-clustering dynamics. Specifically, we assume that the underlying graph $\Gamma$ of system~\eqref{eq:clusteringdynamics} is rooted and $G$ is an arbitrary point group in dimension~$k$. We then establish a necessary and sufficient condition, in Theorem~\ref{thm:clustering}, on the $G$-voltage graph $(\Gamma, \rho)$ under which system~\eqref{eq:clusteringdynamics} is a cluster consensus system: in particular, we show that for any initial condition $p(0) = (x_1(0),\ldots, x_N(0))$, the trajectory $p(t) = (x_1(t),\ldots, x_N(t) )$ generated by system~\eqref{eq:clusteringdynamics} converges, and moreover, 
$$
\lim_{t\to \infty} x_i(t) =\theta_{ij} \lim_{t\to \infty} x_j(t),\hspace{10pt} \forall \, e_{ij} \in E.   
$$
We also establish results, in Corollary~\ref{cor:clustering}, for the problem of counting the number of clusters associated with a $G$-clustering dynamics,  and for the problem of identifying the agents that belong to the same cluster.

Of course, the proof of convergence of a $G$-clustering dynamics relies on the understanding of the underlying $G$-voltage graph. So, in the paper, we will first review some basic definitions and facts associated with a $G$-voltage graph, with $G$ an arbitrary finite group. Various properties of the $G$-voltage graph that are necessary for proving the convergence of system~\eqref{eq:clusteringdynamics} will then be established following that. 

The remainder of the paper is thus organized as follows:  Section~II is mainly devoted to the study of voltage graphs: In Subsection II-A, we recall some definitions of finite groups and directed graphs. In Subsection II-B, we review certain basic notions associated with voltage graphs---such as net voltage, structural balance, local groups, etc. Then, in Subsections II-C and II-D, we 
establish results of voltage graphs  that are necessary for the analysis of a $G$-clustering dynamics. 
Section~III is devoted to the analysis of the so-called derived graph. Roughly speaking, a derived graph of a $G$-voltage graph $(\Gamma, \rho)$ is a special covering graph of $\Gamma$, which is comprised of $|G| |V|$ vertices and $|G| |E|$ edges (a precise definition is in Definition~\ref{def:coveringgraph}). In general, a derived graph has multiple connected components. We show in Subsection III-A that any two connected components are isomorphic. Then, in Subsection III-B, we establish results about the root connectivity of each connected component, which will be of great use in the proof of convergence of a $G$-clustering dynamics. Section~IV is devoted to the analysis of a $G$-clustering dynamics. By combining the results derived in Sections~II and~III, we establish a necessary and sufficient condition for the exponential convergence of a $G$-clustering dynamics.  
Simulation results are also presented as empirical evidence of the convergence. We provide conclusions at the end of the paper.

\section{Voltage Graphs, Structural Balance, Local Groups and Their Associated Properties}

\subsection{Backgrounds of finite groups and of directed graphs}

\subsubsection{On finite groups} 
Let $G$ be a finite group, with $\mathbf{1}$ the identity element of $G$. If $G$ is comprised only of the identity element, then $G$ is said to be {\it trivial}. 
The {\bf order} of the group $G$ is its cardinality $|G|$. 
Let $H$ be a subgroup of $G$. 
It is known that the order of  $H$ divides the order of $G$; the quotient $|G|/|H|$ is the {\bf index} of $H$ in $G$. 
Let $H$ and $H'$ be two subgroups of $G$; we say that $H$ and $H'$ are {\bf conjugate} if  there is a group element $g\in G$ such that 
$
H = g\cdot H' \cdot g^{-1}  
$. 
Let $S$ be a subset of $G$; a subgroup $H$, denoted by $\langle S\rangle $, is said to be {\bf generated by} $S$ if $H$ is the smallest subgroup of $G$ that contains 
$S$. 
We further need the following definitions and notations: 

{\it a). Left- and right-cosets}. Let $H$ be a subgroup of $G$.  For a group element $g\in G$, we call $g\cdot H$ the {\bf left-coset} of $H$ with respect to $g$. For any two group elements $g_1$ and $g_2$ of $G$, the left-cosets $g_1 \cdot H$ and $g_2\cdot H$ are either disjoint or identical with each other. Thus, if we let $k:= |G|/|H|$, then  there are group elements $g_1,\ldots, g_k$ such that 
$
G = \bigsqcup^k_{i = 1} (g_i \cdot H)
$. 
We denote the collection of left-cosets of $H$ by $$G / H := \{ g_i \cdot H \mid i = 1,\ldots, k\}.$$
Similarly, for a group element $g$ and the subgroup $H$, we call $H \cdot g$ the {\bf right-coset} of $H$ with respect to $g$. There are group elements $g'_1,\ldots, g'_k$ such that   
$
G = \bigsqcup^k_{i = 1} (H\cdot g'_i)
$. 
We denote the collection of right-cosets of $H$ by
$$H\backslash G := \{H\cdot g'_i \mid i = 1,\ldots, k\}.$$\,   

{\it b). Group homomorphisms}. Let $G$ and $G'$ be two groups of the same order. A map $\tau: G\longrightarrow G'$ is said to be a {\bf group homomorphism} if for any two group elements $g_1$ and $g_2$ of $G$, we have $\tau(g_1\cdot g_2) = \tau(g_1) \cdot \tau(g_2)$. Furthermore, if $\tau$ is a bijection, then we call $\tau$ a {\bf group isomorphism}.


{\it c). Point groups}.
Let $\O(k)$ be the orthogonal group in dimension~$k$. We express $\O(k)$ as the set of $k$-by-$k$ orthogonal matrices:    
$$
\O(k) = \{\theta\in \R^{k\times k} \mid \theta^\top \theta = I \}.
$$
A group $G$ is said to be a  {\bf point group} in dimension~$k$ if it is a finite subgroup of $\O(k)$. 
Point groups are naturally  used to characterize the geometric symmetries of objects in $\R^k$. Because of the widespread relevance, point groups have been investigated extensively in the literature. In particular,  point groups in lower dimensions have been completely understood. For example, for the case $k = 1$, there is only one nontrivial subgroup of $\O(1)$,  i.e., $G = \{1, -1\}$. For the case $k = 2$, a point group $G$ is isomorphic to either $C_n$, the {\it cyclic group} of order $n$, or $D_n$ the {\it dihedral group} of order $2n$. Specifically, if $G$ is isomorphic to $C_n$, then 
$G = \langle \{\theta_{\rt,n} \}\rangle$, with  $\theta_{\rt, n}$ a rotation matrix given by
\begin{equation}\label{eq:rotationn}
 \theta_{\rt,n} := \begin{bmatrix}
 \cos(2\pi /n) & -\sin(2\pi/n) \\
 \sin(2\pi/n)  & \cos(2\pi/n)
\end{bmatrix}. 
\end{equation}
If $G$ is isomorphic to $D_n$, then 
$
G = \left\langle \{\theta_{\rt,n}, \theta_{\rf, v}\} \right \rangle 
$, with $\theta_{\rt, n}$ defined in~\eqref{eq:rotationn} and $\theta_{\rf,v}$ given by
\begin{equation}\label{eq:reflection}
\theta_{\rf, v} = 2 {vv^\top}/{\|v\|^2} - I, \hspace{10pt} \mbox{for} \hspace{5pt} v\in \R^2 - \{0\}, 
\end{equation}
which represents the reflection of the line $\{\alpha \, v \mid \alpha \in \R\}$ in $\R^2$.  Point groups in dimension three are more complicated. Roughly speaking, the isomorphism classes of point groups in dimension three fall into  fourteen  categories, seven of which are infinite families of axial groups, and the remaining seven are polyhedral groups. We refer to~\cite{coxeter2013generators} for more details. 

We note here that points groups also arose naturally in the theory of finite group representations. Specifically, let $G$ be an arbitrary finite group, and let  $\GL(k,\R)$ be the general linear group of degree~$k$, i.e.,
$$
\GL(k,\R) := \{ A \in \R^{k\times k} \mid \det A \neq 0\}. 
$$
A {\bf representation} of $G$ is a group homomorphism $h: G \longrightarrow \GL(k, \R)$. Then, it is known that $h(G)$, the image of $G$ under $h$, has to be a point group in dimension~$k$.   


\subsubsection{On directed graphs}\label{ssec:ondigraph}
A directed graph (digraph) is said to be {\it simple} if it does not contain multiple edges or self-loops. All directed graphs considered in this paper are simple.   
We introduce below some definitions and notations of simple digraphs that are needed in the paper:

{\it a). Semi-walks, -paths, and -cycles}. A {\bf semi-walk} $w$ of a digraph $\Gamma$ is an alternating  sequence of vertices  and edges:  
\begin{equation}\label{eq:semi-walk}
w = v_{i_1} \, a_1 \, v_{i_2}  \ldots v_{i_{n-1}}\, a_{n-1} \,  v_{i_n},
\end{equation} 
and for each $j = 1,\ldots, n-1$, 
either  $ a_{j} = e_{i_j i_{j+1}} $  or  $ a_{j} = e_{i_{j+1}i_j}$. 
Further, the semi-walk $w$ is said to be a {\bf walk} if  $a_j = e_{i_ji_{j+1}}$ for all $j = 1,\ldots, n-1$. If the semi-walk $w$ is comprised only of a single vertex (and hence does not contain any edge), then $w$ is said to be trivial.   The {\it length} of the semi-walk~$w$, denoted by~$l(w)$, is defined to be the number of edges contained in~$w$.  Let $l_+(w)$ and $l_-(w)$ be two non-negative integers defined as follows: 
$$
\left\{
\begin{array}{l}
l_+(w) := | \{ j \mid  a_{j} = e_{i_ji_{j+1}}, 1\le j \le n-1 \}|, \\
l_-(w) :=  | \{j \mid  a_{j} = e_{i_{j+1}i_j}, 1\le j \le n-1\}|. 
\end{array}
\right. 
$$ 
It should be clear that $l_+(w) + l_-(w) = l(w)$, and $w$ is a walk if and only if $l_-(w) = 0$. 
The semi-walk $w$ in Eq.~\eqref{eq:semi-walk} is said to be {\bf closed} if the {\it starting vertex} $v_{i_1}$ coincides with the {\it ending vertex} $v_{i_n}$.  We say that $w$ is a {\bf semi-path} if  all vertices in $w$ are pair-wise distinct, and is a {\bf semi-cycle} if  there is no repetition of vertices in $w$, other than the repetition of the starting- and ending-vertex. 
Further, we say that  $w$  is  a {\bf path} (resp. a {\bf cycle}) if $w$ is both a walk and a semi-path (resp. a semi-cycle). Note that if $w$ is a trivial semi-walk, then $w$ is also a walk, a path and a cycle. 

{\it b). Operations on semi-walks}. 
Let $\Gamma$ be a weakly connected digraph. We introduce here three types of operations on the semi-walks of $\Gamma$ that will be frequently used in the paper:


 {\it i). Concatenation of semi-walks}. 
Let $w'$ and $w''$ be semi-walks of $\Gamma$, and let the ending vertex of $w'$ coincide with the starting vertex of $w''$:   
$$
\begin{array}{l}
w' = v_{i_1}a_1 \ldots a_{n-1}v_{i_n},  \\
w'' = v_{i_n}a_{n} \ldots a_{n+m-1}v_{i_{n+m}}.
\end{array}
$$
A semi-walk $w$ is a {\bf concatenation} of $w'$ and $w''$, denoted by $w = w'\,w''$, if
$$
w = v_{i_1}a_{1} \ldots a_{n+m -1} v_{i_{n+m}}.
$$ 
Note that if $w$ is a closed semi-walk, then $w$ can be concatenated with itself. We thus denote by $w^k$ the closed semi-walk derived by concatenating $k$ copies of $w$.

 {\it ii). Inverse of a walk}. 
Let $\Gamma$ be a weakly connected voltage graph. Let $v_{i}$ and $v_{j}$ be vertices of $\Gamma$, and $w$ be a semi-walk from~$v_i$ to~$v_j$: 
$$
w = v_{i} \, a_{1}  \ldots a_{n-1} v_{j}. 
$$ 
The {\bf inverse} of $w$, denoted by $w^{-1}$, is a semi-walk from~$v_j$ to~$v_i$ derived by reversing the appearing order of vertices and edges in $w$, i.e.,
$$
w^{-1} := v_{j}\, a_{n-1}\ldots a_{1}v_{i}. 
$$\, 

{\it iii).  Cycle reduction of a closed semi-walk}.   
Let 
$
w = v_{i_1} a_{1} \ldots a_{n-1} v_{i_n}
$, with $v_{i_1} = v_{i_n}$,  be a closed semi-walk of $\Gamma$.   
Suppose that $w$ is not a semi-cycle; then, there is a vertex $v_{i_{j}}$, for $j > 1$,  such that  
$
v_{i_{j}} = v_{i_{j+k}}
$ 
for some $k > 0$.  
Let $k$ be chosen  such that it is the least positive integer for the relation above to hold. Then, the semi-walk 
$$
c_1 := v_{i_j} a_{j} \ldots  a_{j+k-1}v_{i_{j+k}}
$$ 
is a semi-cycle. We can thus derive a closed semi-walk $w_1$ by removing $c_1$ out of $w$, i.e.,  
$$
w_1:= v_{i_1}a_{1} \ldots a_{j - 1} v_{i_j} a_{j+k} \ldots a_{n-1} v_{i_n}.
$$
We call such an operation a {\bf cycle reduction} of $w$. Recall that $l(w)$ is the length of $w$.  It should be clear that $l(w_1) < l(w)$, and hence if we let 
$
w, \,  w_1 ,\,  w_2 \ldots 
$ 
be a chain of semi-walks, with each $w_i$ derived by the operation of cycle-reduction of its predecessor, then the chain must terminate in finite steps. Suppose that the chain terminates at $w_l$;  then,  $w_l$ {\it has to} be a semi-cycle. We call $w,\,  w_1,\, \ldots, \,   w_l$ the {\bf chain of cycle reductions} of $w$. 
We note here that if $w$ is a closed walk, then each $w_i$, for $i = 1,\ldots, l$, in the chain is a closed walk, and each removed semi-cycle $c_i$, for $i = 1,\ldots, l$,  is a cycle.

{\it c). Connectivities of digraphs}. We call a digraph $\Gamma$ {\bf weakly connected} if for any two vertices $v_i$ and $v_j$ of $V$, there is a semi-walk from $v_i$ to $v_j$. The digraph $\Gamma$ is said to be {\bf rooted} if there exists at least one vertex $v_r$ such that for any vertex $v_i$, there is a path from $v_i$ to $v_r$. We call  $v_r$ a {\bf root} of $\Gamma$. 
A pair of distinct vertices $(v_i,v_j)$ of $\Gamma$ is said to be {\bf mutually reachable} if there is a path from $v_i$ to $v_j$ and a path from $v_j$ to $v_i$. 
The digraph $\Gamma$ is said to be {\bf strongly connected} if each pair of distinct vertices is mutually reachable. We also note that if $\Gamma$ is strongly connected, then each vertex is a root.  

{\it d). Induced subgraphs}. 
Let $\Gamma = (V, E)$ be a digraph, and $V'$ be a subset of $V$. A subgraph $\Gamma' = (V', E')$ is said to be {\bf induced by $V'$} if the edge set $E'$ satisfies the following condition: let $v_i$ and $v_j$ be vertices in $V'$; then,  $v_i \to v_j$ is an edge of  $\Gamma'$ if and only if it is an edge of $\Gamma$. Note that if $\Gamma$ is a rooted graph with $V_r$ the set of roots, then the subgraph $\Gamma_r$ induced by $V_r$ is strongly connected.  

{\it e). Graph isomorphisms}.
Let $\Gamma = (V, E)$ and $\Gamma' = (V', E')$ be two digraphs. We say that $\Gamma$ is {\bf isomorphic} to $\Gamma'$ if there is a bijection $\sigma : V \longrightarrow V'$ such that the following condition holds: let $v_{i}$ and $v_j$ be any two vertices of $\Gamma$, then $v_i \to v_j$ is an edge of $\Gamma$ if and only if $\sigma(v_i)\to \sigma(v_j)$ is an edge of $\Gamma'$.  We call $\sigma$ a  {\bf graph isomorphism} between $\Gamma$ and $\Gamma'$.


\subsection{Voltage Graphs, Structural Balance and Local Groups}\label{ssec:preliminary}

In this subsection, we recall the definition of a voltage graph and a few other notions associated with it. We also describe some basic properties associated with a voltage graph.  
We start with the following definition:

\begin{Definition}[Voltage graphs]\label{def:voltagegraphs}
Let $G$ be a finite group. A {\bf voltage graph} is a pair $(\Gamma, \rho)$ with  $\Gamma = (V, E)$ a directed graph, and $\rho: E\longrightarrow G$ a map from the edge set $E$ to $G$. 
A voltage graph $(\Gamma, \rho)$ is {\bf weakly-connected}, {\bf rooted}, and {\bf strongly-connected}, respectively,  if $\Gamma$ is weakly-connected, rooted, and strongly-connected. Let $V'$ be a subset of $V$; a voltage graph $(\Gamma', \rho')$ is {\bf induced by} $V'$ if $\Gamma' = (V', E')$ is a subgraph of $\Gamma$ induced by $V'$ and $\rho': E' \to G$ is derived by restricting $\rho$ to the subset $E'$.  
\end{Definition}


To each voltage graph, one can associate a map which sends a semi-walk of $\Gamma$ to a group element, obtained as a multiplication of the group elements assigned to  the edges by the map $\rho$ along  the semi-walk.  Precisely, we have the following definition: 

\begin{Definition}[Net voltage~\cite{gross1987topological}]  
Let $(\Gamma, \rho)$ be a voltage graph, with $G$ the voltage group. Let ${SW}$ be the set of semi-walks of $\Gamma$. The {\bf net voltage} is  a map 
$
f: {SW} \longrightarrow G
$ 
defined as follows: 
Let $w$ be a semi-walk: 
\begin{equation*}\label{eq:seethewalkagain}
w = v_{i_1}\, a_{1}\, v_{i_2}\ldots v_{i_{n-1}}\, a_{n-1}\,  v_{i_n}.
\end{equation*} 
For each $j = 1,\ldots, n-1$, let 
$$
\ol \rho_w\(a_j\) :=
\left\{
\begin{array}{ll}
\rho(a_j)  & \mbox{if } a_{j} = e_{i_ji_{j+1}}, \\
\rho(a_j)^{-1} & \mbox{if } a_{j} = e_{i_{j+1}i_j}.
\end{array}
\right.  
$$ 
Then, set 
$$
f(w) := \ol \rho_w\(a_{1} \) \cdot  \ldots \cdot \ol \rho_w\( a_{n-1} \).
$$
For the case $w$ is trivial, set $f(w) := \mathbf{1}$.  We call $f(w)$  the {\bf net voltage on $w$}. 
\end{Definition}

\noindent

Note that the two operations on semi-walks---(i) concatenation and (ii) taking inverse---are both compatible with the voltage map. Precisely, 
we have the following fact:

\begin{lem}\label{lem:compatibility}
Let $(\Gamma, \rho)$ be a weakly connected voltage graph, and $w$ be a semi-walk of $\Gamma$. Then, the following hold:
\begin{enumerate} 
\item
Suppose that $w$ is a concatenation of $w'$ and $w''$, i.e., $w = w' \, w''$; then,  
$f(w) = f(w')\cdot f(w'')$.
\item For the inverse of $w$, we have $f(w^{-1}) = f(w)^{-1}.$
\end{enumerate}\,
\end{lem}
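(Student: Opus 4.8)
The plan is to prove both parts directly from the definition of net voltage, by carefully tracking how the overline-notation $\ol\rho_w(a_j)$ behaves under the two operations. The statement is essentially bookkeeping, so the main task is to set up notation that makes the bookkeeping transparent rather than to find a clever idea.

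For part (1), I would write out $w' = v_{i_1}a_1 \ldots a_{n-1}v_{i_n}$ and $w'' = v_{i_n}a_n \ldots a_{n+m-1}v_{i_{n+m}}$, so that the concatenation is $w = v_{i_1}a_1\ldots a_{n+m-1}v_{i_{n+m}}$. The key observation is that for each edge-slot $a_j$, whether $a_j$ equals $e_{i_j i_{j+1}}$ or $e_{i_{j+1} i_j}$ is a purely local condition depending only on $a_j$ and its two bracketing vertices in the sequence; concatenation does not alter any of these local triples (the overlap vertex $v_{i_n}$ is the ending vertex of $w'$ and the starting vertex of $w''$, and the indexing of the $a_j$'s is unchanged). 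Hence $\ol\rho_w(a_j) = \ol\rho_{w'}(a_j)$ for $j \le n-1$ and $\ol\rho_w(a_j) = \ol\rho_{w''}(a_j)$ for $j \ge n$. Then
$$
f(w) = \prod_{j=1}^{n+m-1} \ol\rho_w(a_j) = \Big(\prod_{j=1}^{n-1}\ol\rho_{w'}(a_j)\Big)\Big(\prod_{j=n}^{n+m-1}\ol\rho_{w''}(a_j)\Big) = f(w')\cdot f(w''),
$$
where I should be mildly careful to keep the multiplication in the correct left-to-right order since $G$ need not be abelian, and to handle the degenerate cases where $w'$ or $w''$ is trivial (then one factor is $\mathbf 1$ and the identity holds vacuously).

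For part (2), I would write $w = v_i\, a_1 \ldots a_{n-1}\, v_j$ and its inverse $w^{-1} = v_j\, a_{n-1}\ldots a_1\, v_i$. The crucial point is that reversing the sequence swaps the roles of the two bracketing vertices of each $a_k$: if in $w$ we had $a_k = e_{i_k i_{k+1}}$ (a forward edge contributing $\rho(a_k)$), then in $w^{-1}$ the same $a_k$ appears with its bracketing vertices in the opposite order, so it is read as a backward edge and contributes $\rho(a_k)^{-1}$, and vice versa. Therefore $\ol\rho_{w^{-1}}(a_k) = \ol\rho_w(a_k)^{-1}$ for every $k$. Consequently
$$
f(w^{-1}) = \ol\rho_{w^{-1}}(a_{n-1})\cdots \ol\rho_{w^{-1}}(a_1) = \ol\rho_w(a_{n-1})^{-1}\cdots \ol\rho_w(a_1)^{-1} = \big(\ol\rho_w(a_1)\cdots \ol\rho_w(a_{n-1})\big)^{-1} = f(w)^{-1},
$$
using the anti-homomorphism property $(g_1\cdots g_r)^{-1} = g_r^{-1}\cdots g_1^{-1}$ of inversion in a group, and again treating the trivial case separately.

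The only real obstacle here is notational discipline: because $G$ is non-abelian one must never silently reorder factors, and one must be scrupulous that the index shift in part (1) lines up exactly and that the reversal in part (2) both reverses the product order and inverts each factor — the two effects together are precisely what makes inversion a contravariant (anti-)homomorphism, so they must not be conflated or double-counted. I expect the write-up to be short; a formal induction on $l(w)$ (or on $l(w')$) is an alternative if one prefers to avoid big-product notation, but the direct computation above is cleaner.
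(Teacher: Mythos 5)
Your proof is correct and is exactly the direct-from-definition bookkeeping that the paper has in mind: the paper omits the proof entirely, stating that the results ``directly follow from the definition of the net voltage,'' and your write-up simply makes that omitted argument explicit (splitting the ordered product for concatenation, and using the anti-homomorphism property of inversion for part (2)). Nothing is missing; the care you take with non-commutativity and the trivial cases is appropriate but routine.
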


We omit the proof as the results directly follow from the definition of the net voltage.  
With the net voltage~$f$ at hand, we introduce the notion of structural balance:    

\begin{Definition}[Structural balance]\label{def:structuralbalance}
A voltage graph $(\Gamma, \rho)$ is {\bf structurally balanced} if $f(w) =  \mathbf{1}$ for any closed semi-walk $w$ in $\Gamma$. 
\end{Definition}

We note here that the notion of structural balance is originally defined for signed-graphs~\cite{cartwright1956structural}, and later extended to voltage graphs (see, for example,~\cite{rybnikov2005criteria}). 
We describe below a necessary and sufficient condition for a voltage graph to be structurally balanced. 
Recall that a semi-walk $w$ is said to be a  semi-cycle if there is no repetition of vertices of $w$, other than the repetition of the starting- and ending-vertex. We show below that a voltage graph is structurally balanced if and only if $f(w) = \mathbf{1}$ for any semi-cycle $w$ of $\Gamma$. Appealing to the operation of {\it cycle reduction} on closed semi-walks of $\Gamma$,  we establish the following rsult:

\begin{lem}\label{lem:conditionforstructuralbalance} 
Let $G$ be a finite group, and $(\Gamma, \rho)$ be a voltage graph. Then, $(\Gamma, \rho)$ is structurally balanced if and only if $f(c) = \mathbf{1}$ for all semi-cycles $c$ of $\Gamma$. 
\end{lem}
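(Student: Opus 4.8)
The plan is to prove the two directions of the equivalence separately, the forward direction being trivial and the reverse direction being the substantive one. For the forward direction, note that every semi-cycle is in particular a closed semi-walk, so if $(\Gamma,\rho)$ is structurally balanced then $f(c) = \mathbf{1}$ for every semi-cycle $c$ by Definition~\ref{def:structuralbalance}.

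For the reverse direction, suppose $f(c) = \mathbf{1}$ for every semi-cycle $c$ of $\Gamma$, and let $w$ be an arbitrary closed semi-walk; I must show $f(w) = \mathbf{1}$. The idea is to induct on the length $l(w)$. If $w$ is trivial, then $f(w) = \mathbf{1}$ by definition. If $w$ is itself a semi-cycle, then $f(w) = \mathbf{1}$ by hypothesis. Otherwise, invoke the cycle reduction operation described in Subsection~\ref{ssec:ondigraph}: there is an index $j$ and a least $k > 0$ with $v_{i_j} = v_{i_{j+k}}$, so that $c_1 := v_{i_j} a_j \ldots a_{j+k-1} v_{i_{j+k}}$ is a semi-cycle, and $w_1 := v_{i_1} a_1 \ldots a_{j-1} v_{i_j} a_{j+k} \ldots a_{n-1} v_{i_n}$ is a closed semi-walk with $l(w_1) < l(w)$.

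The key step is to relate $f(w)$ to $f(w_1)$ and $f(c_1)$. Writing $w = u\, c_1\, u'$ where $u = v_{i_1} a_1 \ldots a_{j-1} v_{i_j}$ and $u' = v_{i_{j+k}} a_{j+k} \ldots a_{n-1} v_{i_n}$ (here using $v_{i_j} = v_{i_{j+k}}$ so that the concatenations are well-defined), part~(1) of Lemma~\ref{lem:compatibility} gives $f(w) = f(u)\cdot f(c_1) \cdot f(u')$. Since $c_1$ is a semi-cycle, $f(c_1) = \mathbf{1}$, so $f(w) = f(u) \cdot f(u')$. On the other hand, $w_1 = u\, u'$ is exactly the concatenation of the same two pieces, so Lemma~\ref{lem:compatibility}(1) gives $f(w_1) = f(u)\cdot f(u') = f(w)$. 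By the induction hypothesis applied to the shorter closed semi-walk $w_1$, we have $f(w_1) = \mathbf{1}$, hence $f(w) = \mathbf{1}$. This completes the induction and the proof.

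I do not anticipate a serious obstacle here; the main thing to be careful about is the bookkeeping in splitting $w$ into the three sub-semi-walks $u$, $c_1$, $u'$ and checking that the concatenation operation applies (the endpoints match because $v_{i_j} = v_{i_{j+k}}$), and that the cycle-reduction chain, which by the discussion in Subsection~\ref{ssec:ondigraph} terminates in finitely many steps, is exactly what makes the induction on length legitimate. One could alternatively phrase the argument directly via the chain of cycle reductions $w, w_1, \ldots, w_l$ terminating in a semi-cycle $w_l$, showing $f(w) = f(w_1) = \cdots = f(w_l) = \mathbf{1}$, which avoids an explicit induction hypothesis; either formulation works.
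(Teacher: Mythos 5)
Your proof is correct and follows essentially the same route as the paper: the forward direction is immediate from the definition, and the reverse direction uses the chain of cycle reductions, with your inductive formulation being just a repackaging of the paper's finite-chain argument (as you yourself note). The only difference is that you spell out, via the decomposition $w = u\,c_1\,u'$ and Lemma~\ref{lem:compatibility}, why $f(w) = f(w_1)$ when a semi-cycle of trivial net voltage is removed — a step the paper asserts without comment — so your write-up is, if anything, slightly more complete.
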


\begin{proof}
First, note that if $(\Gamma, \rho)$ is structurally balanced; then, from Definition~\ref{def:structuralbalance}, $f(w) = \mathbf{1}$ for all semi-cycles $w$ of $\Gamma$. We now show that the converse is also true. Let $w$ be closed semi-walk of $\Gamma$, and $w, w_1, \ldots, w_l$ be the chain of cycle reductions of $w$. Each $w_k$, for $k = 1,\ldots, l$, is obtained by removing a semi-cycle, denoted by $c_k$, from its predecessor. By assumption, we have $f(c_k) = \mathbf{1}$ for all $k = 1,\ldots, l$. It then follows that 
$$
f(w) = f(w_1) = \ldots = f(w_l) = \mathbf{1}.
$$   
The last equality holds because $w_l$ is itself a closed semi-walk of $\Gamma$. This completes the proof. 
\end{proof}


Let a voltage graph $(\Gamma, \rho)$  be {structurally unbalanced}. Then, from the definition, there is a closed semi-walk $w$ of $\Gamma$ such that $f(w) \neq \mathbf{1}$. If the voltage graph is a signed-graph, i.e., $G = \Z/(2)$,  then the value of $f(w)$ can only be $-1$. Yet, in the most general case where $G$ is an arbitrary finite group,   the value of $f(w)$ can be varied.     
We thus introduce for each vertex~$v_i$ a subgroup of $G$, termed  a {\it local group}, which characterizes all possible values of $f(w)$ for $w$ a closed semi-walk with $v_i$ the starting- and ending-vertex. Precisely, we have the following:

Let $\Gamma$ be a weakly connected digraph. Recall that $SW$ is the set of semi-walks of $\Gamma$. Let $v_i$ and $v_j$ be two vertices of $\Gamma$; we define $SW(v_i,v_j)$ to be the set of semi-walks of $\Gamma$ from~$v_i$ to~$v_j$. In particular, if $v_j = v_i$, then $SW(v_i,v_i)$ is the set of closed semi-walks of $\Gamma$ with $v_i$ the starting- and ending-vertex.



\begin{Definition}[Local groups]
Let $(\Gamma, \rho)$ be a weakly connected voltage graph, with $G$ the voltage group. 
For a vertex $v_i $ of $\Gamma$, let a subset  of $G$ be defined as follows: 
\begin{equation}\label{eq:defGi}
G_i := \left \{   f(w)   \mid  w\in SW(v_i,v_i)  \right\}.
\end{equation}
It is known that $G_i$ is a subgroup of $G$ (see, for example,~\cite{gross1987topological}). We call $G_i$ the {\bf local group} at $v_i$, and the collection $\{G_i\}_{v_i\in V}$ the  local groups of $(\Gamma, \rho)$.
\end{Definition}

It should be clear that the voltage graph $(\Gamma, \rho)$ is structurally balanced if and only if the local groups $G_i$, for  $v_i\in V$, are trivial subgroups of $G$.   
We further note  that any two local groups $G_i$ and $G_j$ are related by conjugation. Precisely, we have the following fact:

\begin{lem}[\cite{gross1987topological}]\label{lem:conjugation}
Let $(\Gamma,\rho)$ be a weakly connected voltage graph, with $G$ the voltage group. Let $v_i$ and $v_j$ be two vertices of $\Gamma$, and $w$ be a semi-walk from $v_i$ to $v_j$. Then, 
$$
G_j = f(w)^{-1} \cdot G_{i} \cdot f(w).
$$\,
\end{lem}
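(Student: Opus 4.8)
The plan is to exploit the compatibility of the net voltage with concatenation and inversion, as recorded in Lemma~\ref{lem:compatibility}. Fix a semi-walk $w$ from $v_i$ to $v_j$; by Lemma~\ref{lem:compatibility}(2) its inverse $w^{-1}$ is a semi-walk from $v_j$ to $v_i$ with $f(w^{-1}) = f(w)^{-1}$. The key observation is that conjugation by $w$ sets up a bijection between closed semi-walks at $v_i$ and closed semi-walks at $v_j$.

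First I would show the inclusion $f(w)^{-1}\cdot G_i \cdot f(w) \subseteq G_j$. Let $g \in G_i$, so $g = f(u)$ for some $u \in SW(v_i, v_i)$. Consider the concatenation $w' := w^{-1}\, u\, w$, which is a well-defined closed semi-walk at $v_j$ since the ending vertex of $w^{-1}$ is $v_i$, the starting and ending vertex of $u$ is $v_i$, and the starting vertex of $w$ is $v_i$. Applying Lemma~\ref{lem:compatibility}(1) twice gives
$$
f(w') = f(w^{-1})\cdot f(u)\cdot f(w) = f(w)^{-1}\cdot g\cdot f(w),
$$
and since $w' \in SW(v_j, v_j)$, this element lies in $G_j$ by the definition~\eqref{eq:defGi}. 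Hence $f(w)^{-1}\cdot G_i \cdot f(w) \subseteq G_j$.

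For the reverse inclusion, I would apply the same argument with the roles of $v_i$ and $v_j$ swapped, using the semi-walk $w^{-1}$ from $v_j$ to $v_i$: this yields $f(w^{-1})^{-1}\cdot G_j \cdot f(w^{-1}) \subseteq G_i$, that is, $f(w)\cdot G_j\cdot f(w)^{-1} \subseteq G_i$, which rearranges to $G_j \subseteq f(w)^{-1}\cdot G_i\cdot f(w)$. Combining the two inclusions gives $G_j = f(w)^{-1}\cdot G_i\cdot f(w)$, as claimed. There is no real obstacle here: the only points requiring minor care are checking that the concatenations are legitimate (the endpoint-matching conditions) and that $f$ distributes over a triple concatenation, both of which follow immediately from Lemma~\ref{lem:compatibility}(1) by associativity of concatenation; a weakly connected $\Gamma$ guarantees such a semi-walk $w$ exists in the first place.
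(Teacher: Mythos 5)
Your proof is correct, and it is the standard conjugation argument (map a closed semi-walk $u$ at $v_i$ to $w^{-1}\,u\,w$ at $v_j$, then use the symmetric argument with $w^{-1}$ for the reverse inclusion). The paper itself gives no proof of this lemma---it is cited directly from the reference on topological graph theory---so there is nothing to compare against beyond noting that your argument is exactly the canonical one, relying only on Lemma~\ref{lem:compatibility} as you indicate.
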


For the remainder of the subsection, we introduce the notion of a directed local group, which is a variation on the definition of a local group by restricting $f$ to closed walks of $\Gamma$. To proceed, let $W$ be the set of walks of $\Gamma$. Similarly, for two vertices $v_i$ and $v_j$, let $W(v_i,v_j)$ be the set of walks from~$v_i$ to~$v_j$. Then, we make the following definition:

\begin{Definition}[Directed local groups]
Let $(\Gamma, \rho)$ be a weakly connected voltage graph, with $G$ the voltage group. For a vertex $v_i$ of $\Gamma$, let $G^*_i$ be a subset of $G$ defined as follows:
$$
G^*_i = \{ f(w) \mid w\in W(v_i,v_i)\}. 
$$
We call $G^*_i$ the {\bf directed local group} at $v_i$, and the collection $\{G^*_i\}_{v_i \in V}$ the directed local groups of $(\Gamma, \rho)$. 
\end{Definition}
 
We show in the following lemma that each $G^*_i$ is indeed a subgroup of $G$. 

\begin{lem}\label{lem:Gistarisagroup}
Each $G^*_i$, for $v_i \in V$, is a subgroup of $G_i$.
\end{lem}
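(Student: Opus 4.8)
The plan is to prove two separate facts and combine them: (i) the set inclusion $G_i^* \subseteq G_i$, and (ii) that $G_i^*$ is a subgroup of $G$. Fact (i) is immediate from the definitions, since every closed walk is in particular a closed semi-walk, so $W(v_i,v_i) \subseteq SW(v_i,v_i)$ and therefore $G_i^* = \{f(w) \mid w\in W(v_i,v_i)\} \subseteq \{f(w)\mid w\in SW(v_i,v_i)\} = G_i$. Granting (ii), a subgroup of $G$ that is contained in the subgroup $G_i$ is a subgroup of $G_i$, which is the claim.

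For (ii), I would first verify that $G_i^*$ is nonempty and closed under the group operation. Nonemptiness: the trivial semi-walk at $v_i$ is a closed walk with net voltage $\mathbf{1}$, so $\mathbf{1} \in G_i^*$. Closure under multiplication: given $g_1 = f(w_1)$ and $g_2 = f(w_2)$ with $w_1, w_2 \in W(v_i, v_i)$, the concatenation $w_1 w_2$ is defined (the ending vertex of $w_1$ and the starting vertex of $w_2$ are both $v_i$), and it is again a closed walk at $v_i$ because a concatenation of two walks is a walk. By Lemma~\ref{lem:compatibility}(1), $f(w_1 w_2) = f(w_1)\cdot f(w_2) = g_1 g_2$, so $g_1 g_2 \in G_i^*$.

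The one step that needs genuine care is closure under inverses, and I expect this to be the main (only) obstacle. The natural move—taking the inverse walk $w^{-1}$—does not work directly here: $w^{-1}$ is a closed semi-walk but not a closed walk when $w$ is nontrivial, so although $f(w^{-1}) = f(w)^{-1}$ by Lemma~\ref{lem:compatibility}(2), this only places $f(w)^{-1}$ in $G_i$, not in $G_i^*$. Instead I would exploit finiteness of $G$: $G_i^*$ is a nonempty subset of the finite group $G$ that is closed under multiplication, hence a subgroup. Concretely, for $g\in G_i^*$, repeated application of closure gives $g^n \in G_i^*$ for every $n\ge 1$; since $G_i^*$ is finite there exist integers $m > n \ge 1$ with $g^m = g^n$, so $g^{m-n} = \mathbf{1}$ with $m-n \ge 1$, and consequently $g^{-1} = g^{m-n-1} \in G_i^*$ (with the convention $g^0 = \mathbf{1}$). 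This finishes the proof; everything besides this finite-submonoid argument is a routine unwinding of the definitions of net voltage, walk, and concatenation together with Lemma~\ref{lem:compatibility}.
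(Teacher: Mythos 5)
Your proposal is correct and follows essentially the same route as the paper: the inclusion $G^*_i\subseteq G_i$ from $W(v_i,v_i)\subseteq SW(v_i,v_i)$, the identity via the trivial walk, closure under multiplication via concatenation, and inverses via finiteness of $G$ so that $g^{-1}$ is a positive power of $g$ realized by concatenating copies of a closed walk (the paper takes $w^{m-1}$ with $m$ the order of $g$; your finite-submonoid phrasing is the same idea). Your explicit remark that $w^{-1}$ fails to be a walk is a correct and worthwhile observation that the paper leaves implicit.
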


\begin{proof}
First, note that $G^*_i$ is a subset of $G_i$ because $W(v_i,v_i)$ is a subset of $SW(v_i,v_i)$. It thus suffices to show that $G^*_i$ is a subgroup of $G$. 
We need to show that (i) the identity element $\mathbf{1}$ is contained in $G^*_i$; (ii) for any two elements $g_1$ and $g_2$ in $G^*_i$, we have $g_1\cdot g_2\in G^*_i$; and (iii) for any $g\in G^*_i$, we have $g^{-1}\in G^*_i$.     
For~(i), note that the trivial walk $w = v_i$ is contained in $W(v_i,v_i)$, and hence $f(w) = \mathbf{1} \in G^*_i$. 
For~(ii), we first choose closed walks $w_1$ and $w_2$ in $W(v_i,v_i)$ such that 
$
f(w_i) = g_i$, for $i = 1, 2$.  
Let $w := w_1w_2$;  
then, $w\in W(v_i,v_i)$, and hence 
$$
f(w) = f(w_1)\cdot f(w_2) = g_1\cdot g_2 \in G^*_i.
$$
It now remains to establish~(iii). To proceed, note that 
since $G$ is a finite group, there exists a positive integer $m$, as the order of $g$, such that $g^m = \mathbf{1}$. In particular, 
$g^{m-1} = g^{-1} = \mathbf{1}$. Now, choose a $w\in W(v_i,v_i)$ such that $f(w) = g$, and let $w' :=w^{m-1}$. Then, $w'\in W(v_i,v_i)$, and moreover, 
$$f(w') = f(w)^{m-1} = g^{m-1} = g^{-1} \in G^*_i.$$
We have thus proved that $G^*_i$ is a subgroup of $G$. 
\end{proof}

We note here that $G^*_i$ is in general a proper subgroup of $G_i$. Also,  two directed local groups $G^*_i$ and $G^*_j$ may not be related by conjugation; indeed, the orders $|G^*_i|$ and $|G^*_j|$ may not be the same. We provide in Corollary~\ref{thm:propertiesforgstari} (in Subsection~\ref{ssec:onstrong}) sufficient conditions for (i) $G_i = G^*_i$, and (ii) $G^*_i$ and $G^*_j$ to be related by conjugation.


\subsection{On strongly connected voltage graphs}\label{ssec:onstrong}
In this subsection, we focus on the class of strongly connected voltage graphs, and establish certain relevant properties associated with it. 
To proceed, we first  define two  subsets of $G$. First, for any two vertices $v_i$ and $v_j$, let  
$\net(v_i,v_j)$ be defined as follows: 
$$
\net(v_i,v_j) = \{ f(w) \mid w\in SW(v_i,v_j)\}. 
$$
Note that if $v_j = v_i$, then $\net(v_i,v_i) $ is nothing but the local group $G_i$ at $v_i$.  
Recall that $W(v_i,v_j)$ is the set of walks from $v_i$ to $v_j$. 
Now, let a subset of $\net(v_i,v_j)$ be defined as follows: 
$$
\net^*(v_i,v_j) := \{f(w) \mid w\in W(v_i,v_j)\}. 
$$
In general, $\net^*(v_i,v_j)$ is only a proper subset of $\net(v_i,v_j)$. However, in the case when $(\Gamma, \rho)$ is strongly connected, we establish the following result: 

\begin{theorem}\label{pro:walkissufficient}
Let $(\Gamma, \rho)$ be a strongly connected voltage graph. Then, for any two vertices $v_i$ and $v_j$,
$$
\net(v_i,v_j) = \net^*(v_i,v_j). 
$$\,
\end{theorem}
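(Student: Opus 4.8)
The plan is to prove the nontrivial inclusion $\net(v_i,v_j)\subseteq\net^*(v_i,v_j)$ by converting an arbitrary semi-walk into a directed walk with the same endpoints and the same net voltage, \emph{one edge at a time}. The reverse inclusion $\net^*(v_i,v_j)\subseteq\net(v_i,v_j)$ is immediate, since every walk is a semi-walk. So fix a semi-walk $w$ from $v_i$ to $v_j$ and write it as a concatenation $w = s_1 s_2 \cdots s_{n-1}$ of its single-edge sub-semi-walks; by Lemma~\ref{lem:compatibility} we have $f(w) = f(s_1)\cdots f(s_{n-1})$. It then suffices to produce, for each $k$, a \emph{walk} $\tilde s_k$ having the same starting and ending vertex as $s_k$ and with $f(\tilde s_k) = f(s_k)$: the concatenation $\tilde w := \tilde s_1\cdots\tilde s_{n-1}$ is then a legitimate walk from $v_i$ to $v_j$ (the endpoints match up exactly as they did for $w$), and $f(\tilde w) = \prod_k f(\tilde s_k) = \prod_k f(s_k) = f(w)$, so $f(w)\in\net^*(v_i,v_j)$. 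When $s_k$ already traverses its edge in the forward direction, simply set $\tilde s_k := s_k$.

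The only substantive case is a \emph{reversed} single edge, say $s_k = v_b\,e_{ab}\,v_a$, i.e.\ $s_k$ runs from $v_b$ to $v_a$ along the edge $e_{ab}$ (which is directed $v_a\to v_b$), so $f(s_k) = \rho(e_{ab})^{-1}$. The naive fix --- replace $s_k$ by a directed path from $v_b$ to $v_a$, one of which exists because $\Gamma$ is strongly connected --- fails on its own, since such a path $Q$ carries net voltage $f(Q)$, which is generally not $\rho(e_{ab})^{-1}$. The key point is that this discrepancy is correctable: the element $g := \rho(e_{ab})\,f(Q)$ is exactly the net voltage of the closed \emph{walk} $v_a\,e_{ab}\,v_b$ followed by $Q$, hence $g \in G^*_a$, the directed local group at $v_a$. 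By Lemma~\ref{lem:Gistarisagroup}, $G^*_a$ is a subgroup of $G$, so $g^{-1}\in G^*_a$ as well, and I may choose a closed walk $c$ based at $v_a$ with $f(c) = g^{-1}$. Taking $\tilde s_k := Q\,c$ (a valid concatenation, since $Q$ ends at $v_a$, where $c$ is based) gives a walk from $v_b$ to $v_a$ with
\[
f(\tilde s_k) = f(Q)\,f(c) = f(Q)\,\bigl(\rho(e_{ab})\,f(Q)\bigr)^{-1} = \rho(e_{ab})^{-1} = f(s_k),
\]
which is exactly what is needed.

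The step I expect to be the crux is this voltage correction. Once one recognizes that the ``error'' incurred by rerouting a reversed edge along a forward detour lands in the directed local group at the terminal vertex of that detour, the argument closes at once, because Lemma~\ref{lem:Gistarisagroup} guarantees this set is closed under inversion --- so the error can be annihilated by appending a closed walk. Note that strong connectivity is used only here, to supply the forward path $Q$; everything else is bookkeeping with concatenations via Lemma~\ref{lem:compatibility}. (As a sanity check: specializing to $v_j = v_i$ recovers $G_i = G^*_i$ for strongly connected voltage graphs, consistent with the claim preceding Corollary~\ref{thm:propertiesforgstari}.)
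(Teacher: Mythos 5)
Your proof is correct and takes essentially the same approach as the paper: replace each reversed edge by a forward path supplied by strong connectivity, then cancel the resulting voltage discrepancy with a closed walk, which exists because the group is finite. The only cosmetic differences are that you decompose the semi-walk edge-by-edge and invoke Lemma~\ref{lem:Gistarisagroup} to invert the error inside $G^*_a$, whereas the paper iteratively reduces $l_-(w)$ and builds the correcting closed walk explicitly as $c^{m-1}$ with $m$ the order of $f(c)$ --- the same finite-order trick that underlies Lemma~\ref{lem:Gistarisagroup}.
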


\begin{proof}
Let $w$ be a semi-walk of $SW(v_i,v_j)$. 
It suffices to show that  there is a walk $w^* \in W(v_i,v_j)$ such that $f(w) = f(w^*)$.  
Suppose that $w$ is itself a walk; then we can let $w^* = w$.  We thus assume that $w$ is not a walk. Let  
$
w = v_{i_1} a_{1} \ldots a_{n-1} v_{i_n}
$, with $v_{i_1} = v_i$ and $v_{i_n} = v_j$. Then, there exists  an index $k = 1,\ldots, n - 1$ such that 
$
a_{k} = e_{i_{k+1} i_k}
$. Since $\Gamma$ is strongly connected, there is a path $p$ from $v_{i_k}$ to $v_{i_{k+1}}$. By concatenating the path $p$ with the edge $a_k$, we obtain a cycle $c = p\, a_k$ of $\Gamma$, with $v_{i_k}$ the starting- and ending-vertex. 
Since $G$ is a finite group, there exists a positive integer $m$, as the order of $f(c)$, such that 
$f(c)^m = \mathbf{1}$.

Let $w':= c^{m-1} p$; then, $w'$ is a walk from $v_{i_k}$ to $v_{i_{k+1}}$. 
Further, let a semi-walk $w_1$ from $v_i$ to $v_j$ be defined by replacing the edge $a_{k} $ in $w$ with the walk $w' $, i.e., 
$$
w_1 := v_{i_1} a_1 \ldots a_{k-1} v_{i_k} \xrightarrow{w'} v_{i_{k+1}} a_{k+1} \ldots a_{n-1} v_{i_n}. 
$$
Then, using the fact that $f(c)^m = f(w')\cdot \rho(a_k) = \1$, we obtain 
$$
\ol \rho_{w}(a_{k}) = \rho(a_k)^{-1} = f(w'), 
$$
and hence $f(w) = f(w_1)$. 
Recall that $l_-(w)$ is the total number of edges $a_k$ in $w$, with $a_{k} = e_{i_{k+1}i_k} $. From the construction of the semi-walk $w_1$, we have $l_-(w_1) =  l_-(w) - 1$.

Now, suppose that there exists another edge $a_{k'}$ in $w$ (and hence in $w_1$) such that $a_{k'} = e_{i_{k'+1}i_{k'}} $; then, by the same arguments above, we can obtain a new semi-walk $w_2\in SW(v_{i_1}, v_{i_n})$, by replacing the edge $a_{k'}$ in $w_1$ with a particularly chosen walk from $v_{i_{k'}}$ to $v_{i_{k'+1}}$, such that $f(w_2) = f(w_1) = f(w)$ and $l_-(w_2) = l_-(w_1) - 1 = l_-(w) - 2$.  
Continuing with this process, we then obtain,  in finite steps,  a walk $w^*$ from $v_{i}$ to $v_{j}$ with 
 $f(w^*) = f(w)$. This completes the proof. 
 \end{proof}






We state below some implications of Theorem~\ref{pro:walkissufficient}. Recall that from Lemma~\ref{lem:conditionforstructuralbalance}, a voltage graph is structurally balanced if and only if $f(c) = \1$ for each semi-cycle $c$ of $\Gamma$.  
Following Theorem~\ref{pro:walkissufficient},  we establish below a necessary and sufficient condition for a strongly connected voltage graph to be structurally balanced: 

\begin{cor}\label{cor:stronglyconnectedgraph}
Let $(\Gamma, \rho)$ be a strongly connected voltage graph. Then, $(\Gamma, \rho)$ is structurally balanced if and only if $f(c) = \mathbf{1}$ for each cycle $c$ of $\Gamma$.
\end{cor}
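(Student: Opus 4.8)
The plan is to combine Lemma~\ref{lem:conditionforstructuralbalance} with Theorem~\ref{pro:walkissufficient} and reduce the claim about semi-cycles to a claim about cycles. One direction is immediate: if $(\Gamma,\rho)$ is structurally balanced then $f(w)=\mathbf{1}$ for every closed semi-walk, and in particular for every cycle (a cycle is a closed walk, hence a closed semi-walk), so there is nothing to prove. The substance is the converse: assuming $f(c)=\mathbf{1}$ for every cycle $c$ of $\Gamma$, I want to conclude $f(w)=\mathbf{1}$ for every closed semi-walk $w$, which by Lemma~\ref{lem:conditionforstructuralbalance} it suffices to verify only for semi-cycles.

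So first I would fix an arbitrary semi-cycle $w$ based at some vertex $v_i$; thus $w\in SW(v_i,v_i)$ and $f(w)\in \net(v_i,v_i)=G_i$. By Theorem~\ref{pro:walkissufficient} applied with $v_j=v_i$, we have $\net(v_i,v_i)=\net^*(v_i,v_i)$, i.e.\ there is a closed \emph{walk} $w^*\in W(v_i,v_i)$ with $f(w^*)=f(w)$. Next I would run the chain of cycle reductions on $w^*$: since $w^*$ is a closed walk, by the remark following the definition of cycle reduction every semi-cycle removed in the chain is in fact a \emph{cycle}, say $c_1,\ldots,c_l$, and by hypothesis each satisfies $f(c_k)=\mathbf{1}$. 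Using Lemma~\ref{lem:compatibility}(1) to track net voltage across each reduction step (removing a closed sub-walk $c_k$ multiplies by $f(c_k)^{-1}=\mathbf{1}$), we get $f(w^*)=f((w^*)_1)=\cdots=f((w^*)_l)=\mathbf{1}$, the last equality because the terminal semi-cycle is itself a cycle with trivial net voltage. Hence $f(w)=f(w^*)=\mathbf{1}$. Since $w$ was an arbitrary semi-cycle, Lemma~\ref{lem:conditionforstructuralbalance} yields that $(\Gamma,\rho)$ is structurally balanced.

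I do not expect a serious obstacle here — the corollary is essentially a formal consequence of the two preceding results. The one point that requires a little care is making sure the cycle-reduction chain is applied to the \emph{walk} $w^*$ rather than to the original semi-cycle $w$: it is precisely the fact that $w^*$ is a closed walk (not merely a closed semi-walk) that guarantees each removed semi-cycle is an honest cycle, which is what lets the cycle hypothesis be invoked. An equally clean alternative phrasing is to observe directly that $f(c)=\mathbf{1}$ for all cycles forces every directed local group $G^*_i$ to be trivial, then note $G_i=\net(v_i,v_i)=\net^*(v_i,v_i)=G^*_i$ by Theorem~\ref{pro:walkissufficient}, so all local groups $G_i$ are trivial, which (as observed right after the definition of local groups) is equivalent to structural balance. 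I would likely present the first argument as the main proof since it mirrors the structure of the proof of Lemma~\ref{lem:conditionforstructuralbalance}, and mention the local-group reformulation as a one-line remark.
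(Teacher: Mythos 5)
Your proof is correct and follows essentially the same route as the paper's: use Theorem~\ref{pro:walkissufficient} to replace a closed semi-walk by a closed walk with the same net voltage, then apply the chain of cycle reductions to that walk so that every removed semi-cycle (and the terminal one) is an honest cycle with trivial net voltage. The only cosmetic difference is that you argue directly on semi-cycles via Lemma~\ref{lem:conditionforstructuralbalance} while the paper runs the same computation as a proof by contradiction on an arbitrary closed semi-walk; the substance is identical.
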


\begin{proof} First, note that a cycle $c$ of $\Gamma$ is a closed semi-walk, and hence if $(\Gamma, \rho)$ is a structurally balanced voltage graph, then $f(c) = \mathbf{1}$. We now show that the converse is also true.   
The proof is carried out by contradiction. Suppose that, to the contrary, there is a closed semi-walk  $w\in SW(v_i,v_i)$ such that $f(w) \neq \mathbf{1}$. 
Then, from Theorem~\ref{pro:walkissufficient},  there is a closed walk $w^* \in W(v_i,v_i)$ such that $$f(w^*) = f(w) \neq \mathbf{1}.$$ 
Let 
$
w^*_0, \, w^*_1, \, \ldots, \, w^*_l
$, with $w^*_0 = w^*$,  be the chain of cycle reductions of $w^*$, and let $c_i$, for $i = 1,\ldots, l$, be the cycle removed from $w^*_{i-1}$.  
Then, by the fact that $f(c_i) = \mathbf{1}$ for all $i = 1,\ldots, l$, we obtain 
$$
f(w^*) = f(w^*_1) = \ldots = f(w^*_l) \neq \1.
$$
On the other hand,  $w^*_l$ is itself a cycle of $\Gamma$, and hence 
$
f(w^*_l) = \mathbf{1}
$,   
which is a contradiction. This completes the proof. 
\end{proof}

Recall that for a vertex $v_i$ of a  digraph $\Gamma$, we have defined the directed local group at $v_i$ as 
$
G^*_i := \net^*(v_i,v_i) 
$. 
We have shown in Lemma~\ref{lem:Gistarisagroup} that $G^*_i$ is a subgroup of $G_i$. 
We now establish the following result as another corollary to Theorem~\ref{pro:walkissufficient}: 

\begin{cor}\label{thm:propertiesforgstari}
Let $(\Gamma, \rho)$ be a weakly connected voltage graph, with $G$ the voltage group.  Let $\{G^*_i\}_{v_i \in V}$ be the collection of directed local groups of $(\Gamma, \rho)$. 
Then, the following hold: 
\begin{enumerate}
\item If $\Gamma$ is strongly connected, then $G^*_i = G_i$. 
\item If two vertices $v_i$ and $v_j$ of $\Gamma$ are mutually reachable, then $G^*_i$ and $G^*_j$ are related by conjugation: let $w_{ij}$ be a walk from $v_i$ to $v_j$, then
\begin{equation}\label{eq:thm2cor2}
G^*_j = f(w_{ij})^{-1}\cdot G^*_i \cdot f(w_{ij}). 
\end{equation}  
\end{enumerate}\,
\end{cor}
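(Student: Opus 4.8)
The plan is to derive both parts directly from Theorem~\ref{pro:walkissufficient}, which identifies net voltages over semi-walks with net voltages over walks when $\Gamma$ is strongly connected (for part~1) or when the relevant vertices are mutually reachable (for part~2, after passing to the strongly connected subgraph they generate).

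For part~1, recall that $G_i = \net(v_i,v_i)$ and $G^*_i = \net^*(v_i,v_i)$ by definition. By Lemma~\ref{lem:Gistarisagroup} we already have $G^*_i \subseteq G_i$. Strong connectedness of $\Gamma$ lets us invoke Theorem~\ref{pro:walkissufficient} with $v_j = v_i$, which gives $\net(v_i,v_i) = \net^*(v_i,v_i)$, i.e.\ $G_i = G^*_i$. That is the whole argument for~(1); it is essentially a one-line specialization.

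For part~2, suppose $v_i$ and $v_j$ are mutually reachable. Then there exist paths $v_i \to v_j$ and $v_j \to v_i$, so $v_i$ and $v_j$ lie in a common strongly connected induced subgraph $\Gamma' = (V', E')$, namely the one induced by the set of vertices that are mutually reachable with $v_i$ (which is strongly connected, as noted after the definition of induced subgraphs). Restricting $\rho$ to $E'$ gives a strongly connected voltage graph $(\Gamma',\rho')$ whose net voltages agree with those of $(\Gamma,\rho)$ on all semi-walks lying inside $\Gamma'$. Fix a walk $w_{ij}$ from $v_i$ to $v_j$ in $\Gamma'$. Given any closed walk $w \in W(v_i,v_i)$ inside $\Gamma'$, the concatenation $w_{ij}^{-1}\, w\, w_{ij}$ is a \emph{semi-walk} (not a walk, because of the $w_{ij}^{-1}$ leg) from $v_j$ to $v_j$, and by Lemma~\ref{lem:compatibility} its net voltage is $f(w_{ij})^{-1}\cdot f(w)\cdot f(w_{ij})$. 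Applying Theorem~\ref{pro:walkissufficient} inside $\Gamma'$, this semi-walk has the same net voltage as some closed \emph{walk} at $v_j$, so $f(w_{ij})^{-1}\cdot f(w)\cdot f(w_{ij}) \in G^*_j$. This shows $f(w_{ij})^{-1}\cdot G^*_i \cdot f(w_{ij}) \subseteq G^*_j$. The reverse inclusion follows symmetrically, using the walk $w_{ij}^{-1}$-analogue: since $v_j$ and $v_i$ are mutually reachable there is a walk $w_{ji}$ from $v_j$ to $v_i$ in $\Gamma'$, and repeating the argument gives $f(w_{ji})^{-1}\cdot G^*_j \cdot f(w_{ji}) \subseteq G^*_i$; conjugating back (and using that $G^*_i, G^*_j$ are groups of finite order together with Lemma~\ref{lem:compatibility} relating $f(w_{ij})$ and $f(w_{ij}^{-1})$) yields equality~\eqref{eq:thm2cor2}.

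The one point that needs a little care—and which I'd flag as the main subtlety rather than a genuine obstacle—is that $w_{ij}^{-1}$ is never a walk, so one cannot simply conjugate "within $W$"; the whole reason the statement is nontrivial (recall the remark that $G^*_i$ and $G^*_j$ need not be conjugate in general) is that strong connectedness of the subgraph $\Gamma'$ is exactly what is needed to replace the offending reversed leg by a forward walk, which is precisely the content of Theorem~\ref{pro:walkissufficient}. I would make sure the proof explicitly notes that all walks and semi-walks used are taken inside $\Gamma'$, so that Theorem~\ref{pro:walkissufficient} applies and so that the resulting walks still certify membership in the directed local groups of the original $(\Gamma,\rho)$.
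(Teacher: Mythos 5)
Your proof is correct and follows essentially the same route as the paper's: part~1 is the same one-line specialization of Theorem~\ref{pro:walkissufficient}, and for part~2 both arguments pass to a strongly connected subgraph containing $v_i$ and $v_j$ and invoke Theorem~\ref{pro:walkissufficient} to neutralize the reversed leg $w_{ij}^{-1}$. The only cosmetic difference is that the paper applies the theorem once to manufacture a single forward walk $w'_{ji}$ with $f(w'_{ji}) = f(w_{ij})^{-1}$ and then conjugates by concatenating genuine walks, which makes both inclusions immediate and avoids your finite-order cardinality step in the reverse direction.
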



\begin{proof}
The first part of the corollary directly follows from Theorem~\ref{pro:walkissufficient}; indeed, if $v_j = v_i$, then $$G_i =\net(v_i,v_i) = \net^*(v_i,v_i) = G^*_i.$$ 
We now prove the second part. Since $v_i$ and $v_j$ are mutually reachable, there is  a walk $w_{ji}$ from $v_j$ to $v_i$. Let $V'$ be the set of vertices incident to either $w_{ij}$ or $w_{ji}$, and let $\Gamma' = (V', E')$ be the subgraph induced by $V'$. Then, $\Gamma'$ is strongly connected.  From Theorem~\ref{pro:walkissufficient}, there is a walk $w'_{ji}$ of $\Gamma'$ from $v_j$ to $v_i$ such that $f(w'_{ji}) = f(w_{ij}^{-1}) = f(w_{ij})^{-1}$. Now, for each closed walk $w_{ii} \in W(v_i,v_i)$, we can define a closed walk $w_{jj} \in W(v_j,v_j)$ by
$
w_{jj} := w'_{ji} \,w_{ii}\, w_{ij}
$. 
This, in particular, implies that 
\begin{equation}\label{eq:askaliformeet}
G^*_j \supseteq f(w'_{ji}) \cdot G^*_i \cdot f(w_{ij}) = f(w_{ij})^{-1} \cdot G^*_i \cdot f(w_{ij}).
\end{equation}
Conversely, for any $w'_{jj}\in W(v_j,v_j) $, we can define a closed walk $w'_{ii} \in W(v_i,v_i)$ by
$
w'_{ii} := w_{ij} \,w'_{jj}\, w'_{ji} 
$, 
and hence 
\begin{equation}\label{eq:asltamerforreview}
G^*_i \supseteq f(w_{ij})\cdot G^*_j \cdot f(w'_{ji}) = f(w_{ij}) \cdot G^*_j\cdot f(w_{ij})^{-1}.
\end{equation}
Combining~\eqref{eq:askaliformeet} and~\eqref{eq:asltamerforreview}, we establish~\eqref{eq:thm2cor2}. %
\end{proof}

\subsection{On nondegenerate voltage graphs}


Let $(\Gamma, \rho)$ be a voltage graph, with $G$ the voltage group. If the map $\rho$ is such that  $\rho(e_{ij}) = \1$ for each edge $e_{ij}$ of $\Gamma$, then $(\Gamma,\rho)$ is said to be {\it trivial}. We are more interested in nontrivial voltage graphs. We introduce in this subsection a special class of nontrivial voltage graphs, termed  {\it nondegenerate} voltage graphs. Roughly speaking, a nondegenerate voltage graph exhibits all the elements  of the associated voltage group by going along semi-walks of the graph with a fixed starting vertex. We now define nondegenerate voltage graphs in precise term. First, recall that for any two vertices $v_i$ and $v_j$ of $\Gamma$, $SW(v_i,v_j)$ is the set of semi-walks from vertex $v_i$ to $v_j$, and $\net(v_i,v_j)$ is a subset of $G$ defined as follows:  
$$
\net(v_i,v_j) := \{f(w) \mid w\in SW(v_i, v_j)\}. 
$$
Now, we define
$$
SW(v_i, V) :=  \bigcup_{v_j \in V} SW(v_i,v_j).
$$
In other words, $SW(v_i,V)$ is the set of semi-walks of $\Gamma$ with $v_i$ the starting-vertex. Let
$$
\net(v_i,V) := \{f(w) \mid w \in SW(v_i,V)\} =  \bigcup_{v_j \in V} \net(v_i,v_j).
$$ 
We then have the following definition:

\begin{Definition}[Nondegenerate voltage graphs]\label{def:nondegenerate}
Let  $(\Gamma, \rho)$ be a weakly connected voltage graph, with $G$ the voltage group. Then, $(\Gamma, \rho)$ is {\bf nondegenerate} 
 if there exists a vertex $v_i\in V$ such that $\net(v_i,V) = G$. 
\end{Definition}

\noindent 
Note that the definition above does not depend on a particular choice of a vertex $v_i$; indeed, we have the following fact: 

\begin{lem}\label{lem:nondegGgraph}
Let $(\Gamma, \rho)$ be a  weakly connected voltage graph. Then, for any two vertices $v_i$ and $v_j $ of $\Gamma$, we have $|\net(v_i, V)| =|\net(v_j, V)|$.  
\end{lem}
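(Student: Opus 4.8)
The plan is to exhibit an explicit bijection between $\net(v_i, V)$ and $\net(v_j, V)$, built from any fixed semi-walk connecting $v_i$ and $v_j$. Since $\Gamma$ is weakly connected, pick a semi-walk $u \in SW(v_j, v_i)$ and set $h := f(u) \in G$. The idea is that left-multiplication by $h$ should carry $\net(v_i, V)$ into $\net(v_j, V)$: given any $w \in SW(v_i, v_k)$, the concatenation $u\,w$ lies in $SW(v_j, v_k)$, and by Lemma~\ref{lem:compatibility}(1) we have $f(u\,w) = f(u)\cdot f(w) = h \cdot f(w)$. Hence $h \cdot \net(v_i, V) \subseteq \net(v_j, V)$, i.e. the map $g \mapsto h g$ sends $\net(v_i,V)$ into $\net(v_j,V)$.

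For the reverse inclusion, note that $u^{-1} \in SW(v_i, v_j)$ and by Lemma~\ref{lem:compatibility}(2), $f(u^{-1}) = f(u)^{-1} = h^{-1}$. The same argument with the roles of $v_i$ and $v_j$ swapped gives $h^{-1}\cdot \net(v_j, V) \subseteq \net(v_i, V)$, equivalently $\net(v_j, V) \subseteq h \cdot \net(v_i, V)$. Combining the two inclusions yields $h \cdot \net(v_i, V) = \net(v_j, V)$. Since left-multiplication by $h$ is a bijection of $G$ onto itself, restricting it to the subset $\net(v_i, V)$ gives a bijection onto $\net(v_j, V)$, and therefore $|\net(v_i, V)| = |\net(v_j, V)|$.

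There is essentially no obstacle here; the only thing to be careful about is the bookkeeping of which coset side one multiplies on and making sure the concatenation $u\,w$ is legitimate (the ending vertex of $u$ is $v_i$, which is the starting vertex of $w$, so it is). One could alternatively phrase the whole thing as: the set $\net(v_i,V)$ is a union of right cosets of the directed structure, but the cleanest route is the one-line bijection via a connecting semi-walk. I would present it exactly as above, invoking Lemma~\ref{lem:compatibility} for both the concatenation and inverse identities, and noting explicitly that left-translation in a group is a bijection.

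As a remark to include after the proof: this shows the property $\net(v_i,V) = G$ in Definition~\ref{def:nondegenerate} is independent of the vertex $v_i$, since if it holds for one vertex then $|\net(v_j,V)| = |\net(v_i,V)| = |G|$ forces $\net(v_j,V) = G$ for every $v_j$ as well.
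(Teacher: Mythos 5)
Your proof is correct and follows essentially the same route as the paper: both fix a connecting semi-walk $u\in SW(v_j,v_i)$, use concatenation to get $f(u)\cdot \net(v_i,V)\subseteq \net(v_j,V)$, and use the inverse semi-walk for the reverse inclusion. The only cosmetic difference is that you combine the two inclusions into a set equality and invoke bijectivity of left-translation, whereas the paper reads off the two cardinality inequalities directly.
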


\begin{proof}
We first show that $|\net(v_j, V)| \ge |\net(v_i, V)|$, and then show that $|\net(v_j, V)| \le |\net(v_i, V)|$. Since $\Gamma$ is weakly connected,  there is a semi-walk $w$  from $v_j$ to $v_i$. Hence, for any semi-walk $w_i$ in $SW(v_i, V)$, we derive a semi-walk $w_j$ in $SW(v_j, V)$ by concatenating $w$ and $w_i$, i.e., $w_j := w\, w_i$. This, in particular, implies that 
$$
\net(v_j,V) \supseteq f(w) \cdot \net(v_i,V),
$$   
and hence $|\net(v_j,V) | \ge |\net(v_i,V)|$. Applying the same arguments, we obtain 
$$\net(v_i,V) \supseteq f(w^{-1}) \cdot \net(v_j,V),$$ 
and hence $|\net(v_i,V)| \ge |\net(v_j,V)|$.   
This completes the proof.  
\end{proof}

We describe below a necessary and sufficient condition for a voltage graph to be nondegenerate. First, recall that given a group element $g$ of $G$ and  a subgroup $H$, the right-coset of $H$ with respect to $g$ is given by
$H\cdot g$. We establish the following result:

\begin{pro}\label{pro:necessaryandsufficientcondfornond}
Let $(\Gamma, \rho)$ be a voltage graph, with $G$ the voltage group. Then, the following properties hold:
\begin{enumerate}
\item Let $v_i$ and $v_j$ be two vertices of $\Gamma$, and $w$ be a semi-walk from $v_i$ to $v_j$. Then, 
\begin{equation}\label{eq:xiedaoxianzai}
\net(v_i,v_j) = G_i \cdot f(w). 
\end{equation}
In particular, $|G_i|$ divides $|\net(v_i,V)|$.
\item Fix a vertex~$v_i$; then, $(\Gamma, \rho)$ is nondegenerate if and only if for any $g \in G$, there exists a vertex $v_j$ such that $\net(v_i,v_j) = G_i \cdot g$.   
\end{enumerate}\,
\end{pro}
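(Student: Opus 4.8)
The plan is to establish part~1 by a direct computation with net voltages and cosets, and then to deduce part~2 from it together with Lemma~\ref{lem:nondegGgraph}.

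For part~1, I would prove the set identity~\eqref{eq:xiedaoxianzai} by showing both inclusions. For $\net(v_i,v_j) \subseteq G_i\cdot f(w)$: given any $w' \in SW(v_i,v_j)$, the concatenation $w'\,w^{-1}$ is a closed semi-walk at $v_i$, so by Lemma~\ref{lem:compatibility} we get $f(w')\cdot f(w)^{-1} = f(w'\,w^{-1}) \in G_i$, and hence $f(w') \in G_i\cdot f(w)$. For the reverse inclusion: given $g = f(w'')$ with $w''\in SW(v_i,v_i)$, the semi-walk $w''\,w$ runs from $v_i$ to $v_j$ and has net voltage $g\cdot f(w)$, so $G_i\cdot f(w) \subseteq \net(v_i,v_j)$. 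Once~\eqref{eq:xiedaoxianzai} is in hand, the divisibility claim follows by noting that each $\net(v_i,v_j)$ is then a right coset of $G_i$, so $\net(v_i,V) = \bigcup_{v_j\in V}\net(v_i,v_j)$ is a union of right cosets of $G_i$; since distinct right cosets are disjoint, $\net(v_i,V)$ is a disjoint union of sets of size $|G_i|$, and therefore $|G_i|$ divides $|\net(v_i,V)|$.

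For part~2, the first step is to record that nondegeneracy can be checked at the prescribed vertex: by the cardinality assertion of Lemma~\ref{lem:nondegGgraph}, $(\Gamma,\rho)$ is nondegenerate if and only if $\net(v_i,V) = G$, since $|\net(v_i,V)|$ is independent of the vertex and $\net(v_i,V)\subseteq G$. Then I would argue the equivalence directly: if $\net(v_i,V) = G$ and $g\in G$, then $g$ lies in some $\net(v_i,v_j)$, which by part~1 is a right coset of $G_i$ and hence is the coset $G_i\cdot g$ containing $g$, i.e.\ $\net(v_i,v_j) = G_i\cdot g$; conversely, if for every $g\in G$ there is a $v_j$ with $\net(v_i,v_j) = G_i\cdot g$, then $g\in\net(v_i,V)$ for every $g$, so $\net(v_i,V) = G$ and $(\Gamma,\rho)$ is nondegenerate.

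I do not expect a substantial obstacle: the argument is essentially bookkeeping with concatenations of semi-walks and with cosets. The one point that needs to be stated carefully is that $\net(v_i,V)$ is a genuine disjoint union of full right cosets of $G_i$ rather than an arbitrary subset of $G$ — this is precisely what upgrades the set identity~\eqref{eq:xiedaoxianzai} into the divisibility statement and what makes the coset description in part~2 equivalent to nondegeneracy.
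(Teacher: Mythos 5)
Your proof is correct and follows essentially the same route as the paper: both inclusions of~\eqref{eq:xiedaoxianzai} via concatenation with $w^{-1}$ and with $w$ respectively, the divisibility claim from disjointness of right cosets, and part~2 by observing that $\{\net(v_i,v_j)\mid v_j\in V\}$ is a family of right cosets of $G_i$ that exhausts $G$ exactly when the graph is nondegenerate. Your explicit appeal to Lemma~\ref{lem:nondegGgraph} to justify checking nondegeneracy at the prescribed vertex is a small point the paper leaves implicit, but otherwise the arguments coincide.
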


\begin{proof}
We prove part~1 of the proposition by establishing the following two inequalities: $\net(v_i,v_j) \supseteq G_i \cdot f(w)$ and  $\net(v_i,v_j) \subseteq G_i \cdot f(w)$. To establish the first inequality, let $g\in G_i \cdot f(w)$; then, $g = f(w') \cdot f(w)$ 
for $w'$ a closed semi-walk in $SW(v_i.v_i)$. Since the concatenation $w'w$ is a semi-walk from $v_i$ to $v_j$, we have 
$$g = f(w') \cdot f(w) = f(w'w) \in \net(v_i,v_j).$$
To establish the second inequality, let $g\in \net(v_i,v_j)$, and $w_g$ be a semi-walk from~$v_i$ to~$v_j$ such that $f(w_g) = g$. Let $w' = w_g w^{-1}$; then, $w'$ is a closed semi-walk in $SW(v_i,v_i)$, and hence 
$$
g = f(w_g w^{-1}) \cdot f(w) \in G_i \cdot f(w).   
$$
We have thus established~\eqref{eq:xiedaoxianzai}. Then, using the fact that two right-cosets $G_i \cdot g$ and $G_i \cdot g'$ are either identical or disjoint, we have that $|G_i|$ divides  $|\net(v_i,V)|$.

The second part then directly follows from the first part; indeed, from~\eqref{eq:xiedaoxianzai}, we obtain the following relation: 
\begin{equation}\label{eq:notveryhappybutok}
\{\net(v_i,v_j) \mid v_j \in V\} \subseteq G_i \backslash G,
\end{equation}
where we recall that $G_i \backslash G$ is the collection of right-cosets of $G_i$. Since the right-cosets of $G_i$ form a partition of $G$, we conclude that $(\Gamma, \rho)$ is nondegenerate if and only if the equality holds in~\eqref{eq:notveryhappybutok}. 
\end{proof}

In the remainder of the subsection, we focus on voltage graphs that are both structurally balanced and nondegenerate. In particular, we investigate the following question: given a weakly connected digraph $\Gamma$ and a finite group $G$, does there exist a map $\rho: E\longrightarrow  G$ such that $(\Gamma, \rho)$ is both structurally balanced and nondegenerate? We provide a complete answer to this question in the following theorem: 


\begin{theorem}\label{thm:nondegenerateplusstructuralbalance}
Let $\Gamma = (V, E)$ be a weakly connected voltage graph, with $G$ the voltage group. Then, there exists a map $\rho: E\longrightarrow G$ such that the voltage graph $(\Gamma, \rho)$ is  structurally balanced and nondegenerate if and only if $|V| \ge  |G| $. 
\end{theorem}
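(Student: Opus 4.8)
The plan is to prove the two directions separately, with the ``only if'' direction being a short counting argument and the ``if'' direction requiring an explicit construction of $\rho$.

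For the ``only if'' direction, suppose $(\Gamma, \rho)$ is structurally balanced and nondegenerate. Structural balance means every local group $G_i$ is trivial, so by Proposition~\ref{pro:necessaryandsufficientcondfornond}, part~1, we have $\net(v_i, v_j) = G_i \cdot f(w) = \{f(w)\}$, a singleton, for any semi-walk $w$ from $v_i$ to $v_j$. (Here we are also using that structural balance forces $f(w)$ to depend only on the endpoints $v_i, v_j$, not on the choice of semi-walk: if $w, w'$ both go from $v_i$ to $v_j$, then $w(w')^{-1}$ is a closed semi-walk at $v_i$, so $f(w) f(w')^{-1} = \mathbf{1}$.) Fixing a starting vertex $v_i$, nondegeneracy says $\net(v_i, V) = \bigcup_{v_j \in V} \net(v_i, v_j) = G$; but the left side is a union of $|V|$ singletons, so $|G| \le |V|$.

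For the ``if'' direction, assume $|V| \ge |G|$. The idea is to build a structurally balanced $\rho$ by first choosing a ``potential'' function $\phi: V \to G$ and then setting $\rho(e_{ij}) := \phi(v_i)^{-1} \phi(v_j)$ for each edge $e_{ij} \in E$; any $\rho$ of this form is automatically structurally balanced, since for a closed semi-walk $w = v_{i_1} a_1 \cdots a_{n-1} v_{i_n}$ with $v_{i_1} = v_{i_n}$ the net voltage telescopes to $f(w) = \phi(v_{i_1})^{-1} \phi(v_{i_n}) = \mathbf{1}$ (one checks this edge-by-edge, handling forward and reverse traversals via the definition of $\ol\rho_w$). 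With such a $\rho$, for a semi-walk $w$ from $v_i$ to $v_j$ we get $\net(v_i, v_j) = \{\phi(v_i)^{-1}\phi(v_j)\}$, and hence $\net(v_i, V) = \phi(v_i)^{-1} \cdot \phi(V)$. So $(\Gamma, \rho)$ is nondegenerate precisely when $\phi(V) = G$, i.e. when $\phi$ is surjective. Since $|V| \ge |G|$, a surjective $\phi: V \to G$ exists; fixing one and defining $\rho$ as above gives the desired voltage graph. The one point requiring a little care is that $\Gamma$ may fail to be weakly connected as a standalone object — but the hypothesis ``$\Gamma$ is a weakly connected voltage graph'' supplies this, so the telescoping argument and the computation of $\net(v_i, V)$ go through without issue, and no appeal to connectivity of $\Gamma$ beyond what is assumed is needed.

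I expect the main (mild) obstacle to be bookkeeping in the telescoping computation: the net voltage is defined with inverses inserted on reverse-traversed edges, so one must verify carefully that with $\rho(e_{ij}) = \phi(v_i)^{-1}\phi(v_j)$ one indeed has $\ol\rho_w(a_k) = \phi(v_{i_k})^{-1}\phi(v_{i_{k+1}})$ in both cases $a_k = e_{i_k i_{k+1}}$ and $a_k = e_{i_{k+1} i_k}$, after which the product collapses. Everything else — the surjectivity of $\phi$, the identification of $\net(v_i,V)$ with $\phi(v_i)^{-1}\phi(V)$, and the converse counting bound — is routine.
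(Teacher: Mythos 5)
Your proof is correct and follows essentially the same route as the paper: the same counting argument for the ``only if'' direction (structural balance makes each $\net(v_i,v_j)$ a singleton, so $|\net(v_i,V)|\le|V|$, which is just the direct form of the paper's argument by contradiction), and the identical potential-function construction $\rho(e_{ij})=\phi(v_i)^{-1}\phi(v_j)$ from a surjective $\phi:V\to G$ for the ``if'' direction. The only cosmetic difference is that the paper verifies nondegeneracy at a vertex with $\eta(v_i)=\mathbf{1}$, whereas you observe $\net(v_i,V)=\phi(v_i)^{-1}\phi(V)=G$ for any $v_i$; both are fine.
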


\begin{proof}
We first show that if $|V| < |G|$, then there does not exist a map $\rho: E\longrightarrow G$ such that $(\Gamma, \rho)$ is structurally balanced and nondegenerate. Suppose that, to the contrary, these exists such a map $\rho$; then, for any vertex $v_i$, there exists a vertex $v_j$, together with two semi-walks $w_1$ and $w_2$ from $v_i$ to $v_j$, such that $f(w_1) \neq f(w_2)$. This holds because otherwise, $|\net(v_i,V)| \le |V| < |G|$, and hence $(\Gamma, \rho)$ is degenerate. 
By concatenating $w_1$ with $w^{-1}_2$, we obtain a closed semi-walk $w := w_{1}\, w^{-1}_{2}$,  with $v_i$ the starting- and ending-vertex, and moreover, 
$$
f(w) = f(w_1)\cdot f(w^{-1}_2) \neq \1.
$$
Hence, $(\Gamma, \rho)$ is structurally unbalanced, which is a contradiction. 

We now show that if $|V| \ge |G|$, then there is a map $\rho: E\longrightarrow G$ such that the voltage graph $(\Gamma, \rho)$ is structurally balanced and nondegenerate. Since $|V| \ge |G|$, there is a {\it surjective} map $\eta: V\longrightarrow G$. 
Fix any such map $\eta$, and let $\rho: E \longrightarrow G$ be defined as follows: for an edge $e_{ij}$ of $\Gamma$, let
\begin{equation}\label{eq:resultingmaprho}
\rho(e_{ij}):= \eta(v_i)^{-1} \cdot \eta(v_j). 
\end{equation}
We now show that the voltage graph $(\Gamma, \rho)$, with the map $\rho$ defined above, is structurally balanced and  nondegenerate. First, note that by the construction of the map $\rho$, the net voltage satisfies the following condition: let $w$ be a semi-walk from $v_i$ to $v_j$, then 
\begin{equation}\label{eq:relation01}
f(w) = \eta(v_i)^{-1} \cdot \eta(v_j).  
\end{equation}
Hence, for any closed semi-walk $w$, we have $f(w) = \1$, which implies that $(\Gamma, \rho)$ is structurally balanced. We next show that $(\Gamma, \rho)$ is nondegenerate. Since $\eta: V \longrightarrow G$ is surjective, there is a vertex $v_i$ such that $\eta(v_i) = \1$.  Then, from~\eqref{eq:relation01}, we conclude that if $w$ is a semi-walk from $v_i$ to $v_j$, then 
$
f(w) = \eta(v_j)
$. In other words, 
$$
\net(v_i,V) = \{\eta(v_j) \mid v_j \in V\} = G,
$$
which implies that $(\Gamma, \rho)$ is nondegenerate.
\end{proof}

The proof of Theorem~\ref{thm:nondegenerateplusstructuralbalance} further implies the following: first, recall that for a pair of positive integers $(n,k)$, with $n \ge k$, a {\bf Stirling number of the second kind}, denoted by $S(n,k)$, is given by
$$
S(n,k) := \frac{1}{k !}\sum^k_{i=0} (-1)^{k - i} {k \choose i} i^n,   
$$   
which can be viewed as the number of ways to partition a set of $n$ objects into $k$ non-empty subsets; with the number $S(n,k)$ at hand, we establish the following result as a corollary to Theorem~\ref{thm:nondegenerateplusstructuralbalance}: 

\begin{cor}
Let $\Gamma = (V, E)$ be a weakly connected, and $G$ be a finite group. Suppose that $|V| \ge |G|$; then, there are as many as $ S(|V|, |G|) (|G|-1)!$  
different maps $\rho: E\longrightarrow G$ such that the voltage graph $(\Gamma, \rho)$ is nondegenerate. 
\end{cor}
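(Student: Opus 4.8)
The plan is to reduce the corollary to a counting argument built on the structural characterization of the maps $\rho$ produced in the proof of Theorem~\ref{thm:nondegenerateplusstructuralbalance}. The key observation is that a voltage graph $(\Gamma, \rho)$ that is \emph{both} structurally balanced and nondegenerate must arise from a surjective \emph{vertex-labeling} $\eta: V \longrightarrow G$ via the rule $\rho(e_{ij}) = \eta(v_i)^{-1} \cdot \eta(v_j)$. So first I would prove this converse to the construction in Theorem~\ref{thm:nondegenerateplusstructuralbalance}: if $(\Gamma, \rho)$ is structurally balanced, then fixing any vertex $v_i$ and setting $\eta(v_j) := f(w)$ for $w$ any semi-walk from $v_i$ to $v_j$ gives a well-defined map (well-definedness is exactly structural balance, since two semi-walks $w_1, w_2$ from $v_i$ to $v_j$ yield a closed semi-walk $w_1 w_2^{-1}$ with net voltage $\mathbf{1}$), and this $\eta$ recovers $\rho$ by Lemma~\ref{lem:compatibility}; nondegeneracy of $(\Gamma, \rho)$ is then equivalent to surjectivity of $\eta$, since $\net(v_i, V) = \{\eta(v_j) \mid v_j \in V\}$.

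Next I would count. Since $\Gamma$ is weakly connected, the map $\eta$ is determined by its value $\eta(v_i)$ at one fixed vertex together with the induced partition of $V$ into the fibers $\eta^{-1}(g)$, $g \in G$; more directly, two surjective maps $\eta, \eta'$ give the same $\rho$ if and only if $\eta' = g_0 \cdot \eta$ for a fixed $g_0 \in G$ (left-translation preserves $\eta(v_i)^{-1}\eta(v_j)$, and conversely if $\rho = \rho'$ then $\eta(v_i)^{-1}\eta(v_j) = \eta'(v_i)^{-1}\eta'(v_j)$ for all adjacent pairs, so $g_0 := \eta'(v_i)\eta(v_i)^{-1}$ is constant by weak connectedness). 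Hence the number of distinct valid maps $\rho$ equals the number of surjective maps $V \longrightarrow G$ divided by $|G|$. The number of surjections from an $|V|$-set onto a $|G|$-set is $S(|V|, |G|) \cdot |G|!$ by the standard interpretation of Stirling numbers of the second kind (choose the partition into $|G|$ nonempty blocks, then biject the blocks with the elements of $G$), so the count is $S(|V|, |G|) \cdot |G|! / |G| = S(|V|, |G|)(|G|-1)!$, as claimed.

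The main obstacle, and the step deserving the most care, is the rigidity claim: that \emph{every} structurally balanced and nondegenerate $\rho$ comes from such an $\eta$, i.e. that the construction of Theorem~\ref{thm:nondegenerateplusstructuralbalance} exhausts all possibilities. Structural balance alone gives the $\eta$; the subtlety is checking that $\eta$ genuinely recovers $\rho$ on \emph{every} edge (not just edges lying on the chosen semi-walks), which follows because for an edge $e_{ij}$ one may take a semi-walk $w$ from $v_i$ to $v_i$-fixed-base reaching $v_i$, then $w\, e_{ij}$ reaches $v_j$, so $\eta(v_j) = f(w)\rho(e_{ij}) = \eta(v_i)\rho(e_{ij})$. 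A secondary point worth stating explicitly is that the $|G|$-to-one correspondence $\eta \mapsto \rho$ holds because $\Gamma$ is weakly connected — without connectedness the translation $g_0$ need not be global and the count would change.

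One should also double-check the degenerate edge cases of the formula: when $|V| = |G|$ every surjection is a bijection, $S(|G|,|G|) = 1$, and the count is $(|G|-1)!$, which matches the fact that a structurally balanced nondegenerate $\rho$ then forces $\eta$ to be a bijection up to a global translation, i.e. $|G|!/|G| = (|G|-1)!$ choices. When $G$ is trivial, $|G| = 1$, the formula gives $S(|V|,1)\cdot 0! = 1$, corresponding to the unique trivial voltage map $\rho \equiv \mathbf{1}$, which is indeed the only structurally balanced and (vacuously) nondegenerate map onto the trivial group; this sanity check confirms the normalization.
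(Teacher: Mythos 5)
Your proof is correct and follows essentially the same route as the paper: establish that every structurally balanced and nondegenerate $\rho$ arises from a surjective vertex-labeling $\eta$ via $\rho(e_{ij}) = \eta(v_i)^{-1}\eta(v_j)$, then count surjections $V \to G$ as $S(|V|,|G|)\,|G|!$ and divide by $|G|$. The only cosmetic difference is that you identify the $|G|$-fold redundancy by quotienting by global left-translations of $\eta$, whereas the paper normalizes by fixing $\eta(v_1) = \mathbf{1}$; these yield the same count.
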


\begin{proof}
First, note that from the proof of Theorem~\ref{thm:nondegenerateplusstructuralbalance}, if the map $\eta: V\longrightarrow G$ is surjective, then the resulting map $\rho$, defined by~\eqref{eq:resultingmaprho}, yields a structurally balanced and nondegenerate voltage graph $(\Gamma, \rho)$. Conversely, each structurally balanced and nondegenerate voltage graph $(\Gamma, \rho)$ can be constructed in this way. To see this, we first fix a vertex~$v_1$ of $\Gamma$, and let $\eta(v_1) = \1$; then, for any vertex~$v_i$ of $\Gamma$, we choose a semi-walk~$w$ from~$v_1$ to $v_i$, and set $\eta(v_i):= f(w)$. Note that the definition of $\eta(v_i)$ does not depend on a particular choice of the semi-walk~$w$ because $(\Gamma, \rho)$ is structurally balanced. 

The arguments above then imply the following fact: let $N_1$ be the number of maps $\rho: E\longrightarrow G$ such that $(\Gamma, \rho)$ is structurally balanced and nondegenerate, and $N_2$ be the number of maps $\eta: V \longrightarrow  G$ that are surjective; then, $N_1 = N_2/|G|$. To see this, note that $N_1$ is exactly the number of surjective maps $\eta: V\longrightarrow G$ with $\eta(v_1) = \1$, which is then given by $N_2/|G|$.     

It now suffices to compute~$N_2$.  To proceed, note that a surjective map~$\eta$ can be constructed in two steps: first, we partition the vertex set $V$ into $|G|$ non-empty subsets $V_1, \ldots, V_{|G|}$; then, we assign a group element $g_i$ to the vertices of $V_i$, and the assignment is such that $g_1,\ldots, g_{|G|}$ are pairwise distinct. It then follows that $N_2 = S(|V|, |G|) |G|! $, and hence $N_1 = S(|V|, |G|)  (|G| - 1)!$. 
\end{proof}

\section{Derived Graphs and Root-Connectivity of Their Connected Components}\label{ssec:rootconnectivity}

In this section, we introduce an  important object associated with a voltage graph, namely the {\it derived graph}. To proceed, we first recall the notion of a covering graph. Let $\Gamma = (V, E)$ and $\ol \Gamma = (\ol V, \ol E)$ be two arbitrary digraphs, and let $\pi : \ol V \longrightarrow V$ be a surjective map. Then, we say that $\ol \Gamma$ is a {\bf covering graph} of $\Gamma$ (correspondingly, $\pi$ is a {\bf covering map}) if for each vertex $\ol v \in \pi^{-1}(v)$, the numbers of in- and out-neighbors of $\ol v$ in $\ol \Gamma$ are the same as those of $v$ in $\Gamma$.  
In other words, the local structure of $\ol \Gamma$ at $\ol v$ is identical with the local structure of $\Gamma$ at $v$.  The derived graph of $(\Gamma,\rho)$ is then a particular covering graph of $\Gamma$. Precisely, we have the following definition: 

\begin{Definition}[Derived graph]\label{def:coveringgraph}
 Let $(\Gamma, \rho)$ be a voltage graph, with $G$ the voltage group. The {\bf derived graph} $\ol \Gamma = (\ol V, \ol E)$ of $(\Gamma, \rho)$ is a covering graph of $\Gamma$ with $|G| |V|$ vertices and $|G| |E| $ edges. Specifically, we have the following:  
 \begin{enumerate}
\item  The vertex set of $\ol \Gamma$ is 
 $
 \ol V= \{ [g, v_i] \mid g\in G, v_i \in V\}
 $. 
\item  The edge set of $\ol \Gamma$ is determined by the following condition: 
 $
 [g_i, v_i] \to [g_j, v_j] 
 $
 is an edge of $\ol \Gamma$  if and only if $e_{ij}$  is an edge of $\Gamma$ and   
 $g_j = g_i \cdot  \rho(e_{ij})$. 
 \end{enumerate}\,
\end{Definition}

Note that a derived graph $\ol \Gamma$ is indeed a covering graph of $\Gamma$. To see this, let the projection map $\pi: \ol V\longrightarrow V$ be defined  as follows: 
\begin{equation}\label{eq:defprojectionmappi}
\pi: [g, v_i] \mapsto v_i. 
\end{equation}
Then, for each vertex $v_i\in V$, the pre-image $\pi^{-1}(v_i)$ is given by
$
\pi^{-1}(v_i)= G\times \{v_i\} 
$. 
Moreover, the in- and out-neighbors of each vertex $[g, v_i] \in \pi^{-1}(v_i)$ are given by
$$
\left\{
\begin{array}{l}
\cal{N}^-([g,v_i]) = \left \{ [g\cdot \rho(e_{ij}) , v_j]  \mid e_{ij} \in E\right \} \\
\cal{N}^+([g,v_i]) = \left \{ [g\cdot \rho(e_{ki})^{-1}, v_k]  \mid e_{ki} \in E\right \},  
\end{array}
\right.
$$
and hence the numbers of in- and out-neighbors of $[g, v_i]$ are the same as those of $v_i$ in~$\Gamma$. 

\subsection{On connected components of a derived graph}
Let $(\Gamma, \rho)$ be a voltage graph, with $G$ the voltage group,  and $\ol \Gamma$ be the associated derived graph. In general, $\ol \Gamma$ is not connected; indeed, we will see soon that $\ol \Gamma$ is (weakly) connected if and only if the local groups $\{G_i \}_{v_i\in V}$ of $(\Gamma, \rho)$ are such that $G_i = G$ for all $v_i \in V$. 
Suppose that $\ol \Gamma$ is not connected; then, it must be comprised of multiple weakly connected subgraphs.  
We call each connected subgraph  a {\bf connected component} of $\ol \Gamma$. 
In this subsection, we describe certain relevant properties associated with the connected components of a derived graph. To proceed, we first recall that for a group element $g\in G$,  the left-coset of $G_i$ with respect to~$g$ is given by $g \cdot G_1$. We start with the following fact: 


\begin{lem}\label{lem:couldbeuseful}
Let $[g, v_i]$ and $[g',v_j]$ be two vertices of the derived graph $\ol \Gamma$. Then, the following hold: 
\begin{enumerate}
\item There is a semi-walk  (resp. walk) from $[g, v_i]$ to $[g',v_j]$ if and only if there is a semi-walk (resp. walk) $w$ of $\Gamma$ from~$v_i$ to~$v_j$ such that $g' = g \cdot f(w)$. \item If $v_j = v_i$, then $[g,v_i]$ and $[g', v_i]$ belong to the same connected component if and only if $g$ and $g'$ belong to the same left-coset of~$G_i$.   
\end{enumerate}
\end{lem}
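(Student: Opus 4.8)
The plan is to prove both parts by tracking the voltage along a semi-walk in the base graph $\Gamma$ and using the edge condition in Definition~\ref{def:coveringgraph} to lift it to a semi-walk in $\ol\Gamma$, and conversely to project a semi-walk of $\ol\Gamma$ down to one of $\Gamma$.

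For part~1, I would first handle a single edge and then induct on the length of the semi-walk. Consider an edge $e_{ij}$ of $\Gamma$. By Definition~\ref{def:coveringgraph}, $[g_i, v_i] \to [g_j, v_j]$ is an edge of $\ol\Gamma$ precisely when $g_j = g_i \cdot \rho(e_{ij})$. Note that traversing $e_{ij}$ \emph{backwards} in a semi-walk (i.e.\ the step $a = e_{ij}$ used as $e_{i_{k+1}i_k}$) corresponds in $\ol\Gamma$ to moving from $[g_j, v_j]$ to $[g_i, v_i]$, and $g_i = g_j \cdot \rho(e_{ij})^{-1} = g_j \cdot \ol\rho_w(a)$; so in either direction, crossing one edge $a$ of a semi-walk multiplies the group label on the right by $\ol\rho_w(a)$. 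Now, given a semi-walk $w = v_{i_1} a_1 v_{i_2} \ldots a_{n-1} v_{i_n}$ of $\Gamma$ from $v_i$ to $v_j$, I claim that for any $g \in G$ there is a unique semi-walk $\ol w$ in $\ol\Gamma$ starting at $[g, v_i]$ and projecting to $w$ under $\pi$; its consecutive vertices are $[g, v_{i_1}], [g\cdot \ol\rho_w(a_1), v_{i_2}], \ldots$, ending at $[g \cdot \ol\rho_w(a_1)\cdots \ol\rho_w(a_{n-1}), v_j] = [g \cdot f(w), v_j]$. This is a straightforward induction on $n$ using the edge characterization above; the base case $n=1$ (trivial semi-walk) is immediate. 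For the converse, any semi-walk $\ol w$ in $\ol\Gamma$ from $[g,v_i]$ to $[g',v_j]$ projects under $\pi$ to a semi-walk $w := \pi(\ol w)$ of $\Gamma$ from $v_i$ to $v_j$ (each edge of $\ol\Gamma$ projects to an edge of $\Gamma$), and by the same telescoping computation the endpoint label is $g\cdot f(w)$, forcing $g' = g\cdot f(w)$. Finally, the parenthetical ``walk'' version follows by the same argument restricted to forward edges: $\ol w$ is a walk iff every $a_k = e_{i_k i_{k+1}}$ iff $\pi(\ol w)$ is a walk, since $\pi$ maps forward edges of $\ol\Gamma$ to forward edges of $\Gamma$ and vice versa.

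For part~2, take $v_j = v_i$. Two vertices lie in the same connected component of $\ol\Gamma$ iff there is a semi-walk between them. By part~1, $[g, v_i]$ and $[g', v_i]$ are joined by a semi-walk iff there is a closed semi-walk $w \in SW(v_i, v_i)$ with $g' = g\cdot f(w)$, i.e.\ $g^{-1} g' = f(w)$ for some $w \in SW(v_i,v_i)$. By the definition of the local group $G_i$ in~\eqref{eq:defGi}, this says exactly $g^{-1} g' \in G_i$, which is the condition that $g$ and $g'$ lie in the same left-coset of $G_i$. (Here I use that $g^{-1}g' \in G_i \iff g' \in g\cdot G_i \iff g\cdot G_i = g'\cdot G_i$.)

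The only mildly delicate point is the bookkeeping around backward edges in a semi-walk: one must check that the lift is consistent whether an edge is traversed forward or backward, which is exactly why $\ol\rho_w(a_k)$ (rather than $\rho(a_k)$) appears and why the endpoint label is $g\cdot f(w)$. Once that single-edge compatibility is nailed down, both parts are routine inductions, so I do not expect any real obstacle.
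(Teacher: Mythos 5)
Your proof is correct and follows essentially the same route as the paper: the paper simply asserts that part~1 ``directly follows from Definition~\ref{def:coveringgraph}'' and then gives the same coset argument for part~2, while you have merely spelled out the lifting/projection bookkeeping (including the $\ol\rho_w$ bookkeeping for backward edges) that the paper leaves implicit. No gaps.
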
 

\begin{proof}
The first part of the lemma directly follows from Definition~\ref{def:coveringgraph}. For the second part, first note that $[g, v_i]$ and $[g',v_i]$ belong to the same component if and only if $g = g' \cdot f(w)$ for~$w$ a closed semi-walk in $SW(v_i,v_i)$. Since $f(w)\in G_i$, we thus conclude that $g$ and $g'$ belong to the same left-coset of~$G_i$.  
\end{proof}





For the remainder of the subsection, we fix a vertex $v_1$ of $\Gamma$, and let $G_1$ be the local group at $v_1$. 
Let $k$ be the index of $G_1$ in $G$, and let $g_1,\ldots, g_k\in G$ be chosen such that the left-cosets  $g_1 \cdot G_1,\ldots, g_k \cdot G_1$ partition the group $G$.  From the second part of Lemma~\ref{lem:couldbeuseful}, $[g_i,v_1]$ and $[g_j, v_1]$, for $i\neq j$,  belong to two different connected components of $\ol \Gamma$. In other words, there are at least~$k$ connected components of $\ol \Gamma$ (we will see soon that  the number~$k$ is actually exact). To proceed,  recall that two digraphs  $\Gamma = (V, E)$ and $\Gamma' = (V', E')$ are said to be isomorphic if there is a bijection $\sigma : V \longrightarrow V'$, termed a graph isomorphism, such that for any two vertices $v_{i}$ and $v_j$ of $\Gamma$, $v_i \to v_j$ is an edge of $\Gamma$ if and only if $\sigma(v_i)\to \sigma(v_j)$ is an edge of $\Gamma'$.   
We now establish the following result:

\begin{pro}\label{lem:connectedcomponentsofderivedgraph}
Let $(\Gamma,\rho)$ be a weakly connected voltage graph, with $G$ the voltage group. Let $G_1$ be the local group at vertex~$v_1$, and $k:= |G|/|G_1|$ the index of $G_1$ in $G$. Let $g_1,\ldots, g_k$ be chosen such that $G = \bigsqcup^k_{i = 1} (g_i \cdot G_1)$. 
Then, the following hold for the connected components of the associated derived graph~$\ol \Gamma$:
\begin{enumerate}
\item There are $k$ connected components of $\ol \Gamma$, labelled as $\ol \Gamma_1 = (\ol V_1, \ol E_1),\ldots,\ol \Gamma_k = (\ol V_k, \ol E_k)$.   
Any two connected components are isomorphic: without loss of generality, let $[g_i, v_1]\in \ol V_i$, for all $i = 1,\ldots, k$; then, the map $$\sigma_{ij}: [g,v] \mapsto [g_j \cdot g^{-1}_i\cdot g, v], $$ 
when restricted to $\ol V_i$ is a graph isomorphism between $\ol \Gamma_i$ and $\ol \Gamma_j$.    
\item If $(\Gamma, \rho)$ is structurally balanced, then each connected component  is isomorphic to $\Gamma$. The projection map $$\pi_i: [g, v]  \mapsto  v,$$ 
when restricted to $\ol V_i$  is a graph isomorphism between $\ol \Gamma_i$ and $\Gamma$.  
\end{enumerate}\,   
\end{pro}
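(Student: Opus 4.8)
The plan is to run the whole argument through Lemma~\ref{lem:couldbeuseful}: part~(1) of that lemma converts reachability between two lifted vertices into the existence of a semi-walk of $\Gamma$ carrying a prescribed net voltage, while part~(2) characterizes exactly when two lifts of the same vertex lie in one connected component, namely when the two group labels lie in a common left-coset of the local group. With these two facts in hand, the proof is essentially coset bookkeeping.

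First I would count the connected components of $\ol\Gamma$. The inequality ``$\ge k$'' is already recorded just before the statement: $[g_i,v_1]$ and $[g_j,v_1]$ sit in distinct components for $i\ne j$ since $g_i\cdot G_1$ and $g_j\cdot G_1$ are disjoint. For ``$\le k$'', take an arbitrary $[g,v]\in\ol V$; weak connectivity of $\Gamma$ supplies a semi-walk $w$ from $v$ to $v_1$, so by Lemma~\ref{lem:couldbeuseful}(1) there is a semi-walk of $\ol\Gamma$ from $[g,v]$ to $[g\cdot f(w),v_1]$. If $g\cdot f(w)\in g_i\cdot G_1$, then Lemma~\ref{lem:couldbeuseful}(2) places $[g\cdot f(w),v_1]$, hence also $[g,v]$, in the component of $[g_i,v_1]$. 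So $\ol\Gamma$ has exactly $k$ components, and I label them $\ol\Gamma_1,\dots,\ol\Gamma_k$ so that $[g_i,v_1]\in\ol V_i$.

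For the isomorphisms in part~1, I would observe that $\sigma_{ij}\colon[g,v]\mapsto[g_j g_i^{-1} g,v]$ is in fact a graph automorphism of the entire derived graph $\ol\Gamma$: it is a bijection of $\ol V$ with inverse $\sigma_{ji}$, and it respects edges because the defining relation $g'=g\cdot\rho(e_{ab})$ for $[g,v_a]\to[g',v_b]$ is equivalent to $g_j g_i^{-1} g' = (g_j g_i^{-1} g)\cdot\rho(e_{ab})$. An automorphism permutes the connected components; since $\sigma_{ij}([g_i,v_1])=[g_j,v_1]$, it carries $\ol\Gamma_i$ onto $\ol\Gamma_j$, and restricting it to $\ol V_i$ yields the claimed isomorphism $\ol\Gamma_i\to\ol\Gamma_j$.

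For part~2, structural balance makes every local group $G_v$ trivial (as noted right after the definition of local groups), so $k=|G|$ and each $\ol V_i$ has exactly $|V|$ vertices. I claim $\pi_i\colon[g,v]\mapsto v$, restricted to $\ol V_i$, is a graph isomorphism onto $\Gamma$. It is injective: if $[g,v],[g',v]\in\ol V_i$, they lie in the same component, so by Lemma~\ref{lem:couldbeuseful}(2) the labels $g,g'$ lie in the same left-coset of $G_v=\{\mathbf{1}\}$, forcing $g=g'$; surjectivity then follows by cardinality (or by lifting a semi-walk from $v_1$ to $v$). Edge preservation in the forward direction is immediate from the definitions of $\ol E$ and $\pi_i$; for the converse, given an edge $v_a\to v_b$ of $\Gamma$ and lifts $[g,v_a],[g',v_b]\in\ol V_i$, the out-neighbor $[g\cdot\rho(e_{ab}),v_b]$ of $[g,v_a]$ also lies in $\ol V_i$ and projects to $v_b$, so injectivity of $\pi_i$ gives $g'=g\cdot\rho(e_{ab})$, which is precisely the condition for $[g,v_a]\to[g',v_b]$ to be an edge. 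The bijection/automorphism checks are routine; the only step requiring a moment's care is this last reverse edge-preservation argument, where one must go through injectivity of $\pi_i$ rather than argue the group identity directly.
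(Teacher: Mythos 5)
Your proof is correct and follows essentially the same route as the paper's: both arguments reduce everything to the two parts of Lemma~\ref{lem:couldbeuseful}, count the components by lifting a semi-walk from $v$ to $v_1$ and sorting the endpoint into a left-coset of $G_1$, and verify edge-preservation of $\sigma_{ij}$ and $\pi_i$ directly from the defining relation $g' = g\cdot\rho(e_{ab})$. The only (cosmetic) difference is that you package $\sigma_{ij}$ as a global automorphism of $\ol\Gamma$ that permutes components, whereas the paper checks directly that $\sigma_{ij}$ restricted to $\ol V_i$ lands in $\ol V_j$; your framing slightly streamlines the bijectivity step but uses the same underlying facts.
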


\begin{proof}
We first prove part~1 of the proposition. From the second part of Lemma~\ref{lem:couldbeuseful}, there exist at least~$k$ connected components of $\ol \Gamma$. To show that~$k$ is exact, it suffices to show that for any vertex $[g, v]$ of $\ol \Gamma$, there is a semi-walk from $[g,v]$ to $[g',v_1]$ for some $g'\in G$.  Let~$w$ be a semi-walk of $\Gamma$ from~$v_i$ to~$v$; then, from the first part of Lemma~\ref{lem:couldbeuseful}, there is a semi-walk from $[g, v]$ to $[g \cdot f(w), v_1]$. 

We next show that $\ol \Gamma_i = (\ol V_i, \ol E_i)$ and $\ol \Gamma_j = (\ol V_j, \ol E_j)$ are isomorphic, with $\sigma_{ij}: \ol V_i\longrightarrow \ol V_j$ a graph isomorphism. 
First, note that $\sigma_{ij}$ is indeed a bijection between $\ol V_i$ and $\ol V_j$.  To see this, let $[g, v]\in \ol V_i$, and we show that $[g_j \cdot g^{-1}_i \cdot g, v] \in \ol V_j$.   
Since $[g_i, v_1]\in \ol V_i$, from the first part of Lemma~\ref{lem:couldbeuseful}, there is a semi-walk~$w$ from~$v_1$ to~$v$ such that $f(w) = g^{-1}_i \cdot g$, which in turn implies that there is a semi-walk from $[g_j, v_1]$ to 
$$[g_j \cdot f(w), v] = [g_j \cdot g^{-1}_i \cdot g, v] \in \ol V_j. $$  
It then follows that $\sigma_{ij}$ is a graph isomorphism: let $[g,v_a] \to [g\cdot \rho(e_{ab}), v_b]$ be an edge of $\ol \Gamma_i$; then, $$[g_j \cdot g^{-1}_i\cdot g, v_a] \to [g_j \cdot g^{-1}_i\cdot g\cdot \rho(e_{ab}), v_b]$$
is an edge of $\ol \Gamma_j$, and vice versa. We have thus established the first part of the proposition. 

To establish the second part, first note that in the case $(\Gamma, \rho)$ is structurally balanced, we have $|G_1| = 1$, and hence there are $|G|$ connected components of $\ol \Gamma$, each of which has $|V|$ vertices and $|E|$ edges. In particular, the projection map $\pi_i$ is a bijection between $\ol V_i$ and $V$. Now, let $[g, v_a]$ be a vertex of $\ol \Gamma_i$, and  $[g, v_a] \to [g', v_b]$ be an edge of $\ol \Gamma_i$; then, it should be clear that $e_{ab}$ is an edge of $\Gamma$, and moreover, $g' = g \cdot \rho(e_{ab})$.  Conversely, if $e_{ab}$ is an edge of $\Gamma$, then by the fact that $\pi_i$ is a bijection, we conclude that $\pi^{-1}(v_a) = [g, v_a]$ and $\pi^{-1}(v_b) = [g\cdot \rho(e_{ab}), v_b]$, and hence
$$
\pi^{-1}(v_a) \to \pi^{-1}(v_b) = [g, v_a] \to [g\cdot \rho(e_{ab}), v_b]
$$      
is an edge of $\ol \Gamma_i$. This completes the proof. 
\end{proof}

\subsection{On root-connectivity of the connected components}
In this subsection, we assume that a voltage graph is rooted, and investigate the root connectivity of each connected component of the associated derived graph.  To proceed, we first recall some proven results about the collection of directed local groups  $\{G^*_i\}_{v_i\in V}$: (i) we have shown that   
each $G^*_i$ is a subgroup of the local group $G_i$; (ii) we have also shown that if $v_i$ and $v_j$ are mutually reachable,  then $G^*_i$ and $G^*_j$ are related by conjugation. Now, let $\Gamma$ be a rooted graph, and  $v_i$ and $v_j$ be two roots  of $\Gamma$. Then, $v_i$ and $v_j$ are mutually reachable, and hence $G^*_i$ and $G^*_j$ are related by conjugation. This, in particular, implies that if $G^*_i = G_i$ for some root $v_i$ of $\Gamma$, then $G^*_j = G_j$ for all roots  $v_j$.   
With the preliminaries above, we establish the following result: 






\begin{theorem}\label{thm:coveringgraph}
Let $(\Gamma, \rho)$  be a rooted voltage graph, with $G$ the voltage group.  Let $\{G_i\}_{v_i\in V}$ (resp. $\{G^*_i\}_{v_i\in V}$) be the  local groups (resp. directed local groups) of $(\Gamma,\rho)$. Let $\ol \Gamma$ be the derived graph of $(\Gamma, \rho)$. Then, the connected components of $\ol \Gamma$ are rooted if and only if $G^*_i = G_i$ for some (and hence any)  root $v_i$ of $\Gamma$.   
\end{theorem}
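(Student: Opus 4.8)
The plan is to prove both directions by translating root-connectivity in the derived graph $\ol\Gamma$ into statements about net voltages via Lemma~\ref{lem:couldbeuseful}. Fix a root $v_r$ of $\Gamma$, so that for every vertex $v_i$ there is a path (walk) from $v_i$ to $v_r$; in particular $v_r$ and any other root are mutually reachable, so by Corollary~\ref{thm:propertiesforgstari}(2) the condition ``$G^*_r = G_r$'' is independent of which root we pick. By Proposition~\ref{lem:connectedcomponentsofderivedgraph}(1) all connected components of $\ol\Gamma$ are isomorphic, so it suffices to decide rootedness for a single component, and it is convenient to work with the component containing $[\mathbf{1}, v_r]$. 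A natural candidate root of that component is $[\mathbf{1}, v_r]$ itself: by Lemma~\ref{lem:couldbeuseful}(1), a walk in $\ol\Gamma$ from $[g,v_i]$ to $[\mathbf{1},v_r]$ exists if and only if there is a walk $w$ in $\Gamma$ from $v_i$ to $v_r$ with $g\cdot f(w) = \mathbf{1}$, i.e.\ with $f(w) = g^{-1}$.

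For the ``if'' direction, assume $G^*_r = G_r$. Take any vertex $[g, v_i]$ lying in the same component as $[\mathbf{1}, v_r]$. Since $\Gamma$ is rooted, pick any walk $w_0$ from $v_i$ to $v_r$; then $[g,v_i]$ reaches $[g\cdot f(w_0), v_r]$ in $\ol\Gamma$ by Lemma~\ref{lem:couldbeuseful}(1), so $[g\cdot f(w_0), v_r]$ is in the same component as $[\mathbf{1},v_r]$, and by Lemma~\ref{lem:couldbeuseful}(2) this forces $g\cdot f(w_0) \in G_r$ (the component of $[\mathbf{1},v_r]$ consists of the $[h,v_r]$ with $h$ in the left-coset $\mathbf{1}\cdot G_r = G_r$). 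Now using $G_r = G^*_r$, there is a closed \emph{walk} $u\in W(v_r,v_r)$ with $f(u) = (g\cdot f(w_0))^{-1}$. Concatenating, $w := w_0\, u$ is a walk from $v_i$ to $v_r$ with $f(w) = f(w_0)\cdot (g\cdot f(w_0))^{-1} = g^{-1}$, so by Lemma~\ref{lem:couldbeuseful}(1) there is a walk in $\ol\Gamma$ from $[g,v_i]$ to $[\mathbf{1},v_r]$. Hence $[\mathbf{1},v_r]$ is a root of its component, and by the isomorphisms of Proposition~\ref{lem:connectedcomponentsofderivedgraph}(1) every component is rooted.

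For the ``only if'' direction, suppose some (hence every) component of $\ol\Gamma$ is rooted; we show $G_r \subseteq G^*_r$ (the reverse inclusion is Lemma~\ref{lem:Gistarisagroup}). Let $g\in G_r$, so by Lemma~\ref{lem:couldbeuseful}(2) the vertex $[g,v_r]$ lies in the same component as $[\mathbf{1},v_r]$. I will argue that a \emph{root} of this component may be taken at a vertex of the form $[h, v_r]$: since $\Gamma$ is rooted, from any candidate root $[h',v_i]$ there is, by Lemma~\ref{lem:couldbeuseful}(1), a walk in $\ol\Gamma$ to $[h'\cdot f(w'), v_r]$ for a walk $w'$ from $v_i$ to $v_r$, and a root can reach everything, so in fact we may relocate to $[h,v_r]$ with $h := h'\cdot f(w')$, which is still a root of the component. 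Being a root, $[h,v_r]$ admits a walk in $\ol\Gamma$ to $[g,v_r]$ and a walk to $[\mathbf{1},v_r]$; by Lemma~\ref{lem:couldbeuseful}(1) these give closed walks $u_1, u_2\in W(v_r,v_r)$ with $h\cdot f(u_1) = g$ and $h\cdot f(u_2) = \mathbf{1}$. Then $u_2^{-1}u_1$ need not be a walk, but $f(u_2)^{-1} f(u_1) = h^{-1}g \cdot$, wait — more carefully: from $h = f(u_2)^{-1}$ and $g = h f(u_1) = f(u_2)^{-1} f(u_1)$; since $f(u_2)^{-1} = f(u_2)^{|G|-1} = f(u_2^{\,|G|-1})$ and $u_2^{\,|G|-1}u_1 \in W(v_r,v_r)$ is a genuine closed walk, we get $g = f(u_2^{\,|G|-1}u_1) \in G^*_r$. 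Hence $G_r\subseteq G^*_r$, completing the equality.

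The main obstacle is the ``only if'' direction: one cannot assume the root of a component sits over $v_r$ a priori, nor that it has the form $[h,v_r]$, so the argument relocating the root along a walk of $\Gamma$ into the fiber over $v_r$ (and then the inversion trick turning $f(u_2)^{-1}$ into the net voltage of an honest closed walk $u_2^{\,|G|-1}$) is the crux; everything else is bookkeeping with Lemma~\ref{lem:couldbeuseful} and Proposition~\ref{lem:connectedcomponentsofderivedgraph}.
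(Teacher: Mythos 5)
Your overall strategy is sound and genuinely different from the paper's. The paper proves the theorem by passing to the voltage graph $(\Gamma_r,\rho_r)$ induced by the root set $V_r$, identifying its local groups with the directed local groups $G^*_i$, counting the connected components of its derived graph $\ol\Gamma_r$ against those of $\ol\Gamma$ (the ratio $l=k^*/k$ of the two component counts), and showing that a component of $\ol\Gamma$ is rooted precisely when it contains exactly one component of $\ol\Gamma_r$, i.e., when $l=1$. You instead work directly with the distinguished vertex $[\1,v_r]$ and explicit walk constructions via Lemma~\ref{lem:couldbeuseful}; your ``if'' direction in particular is shorter and cleaner than the paper's, and it correctly isolates the one place where $G_r=G^*_r$ is used (upgrading the coset membership $g\cdot f(w_0)\in G_r$ to the existence of a closed \emph{walk} $u$ with $f(u)=(g\cdot f(w_0))^{-1}$).

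The ``only if'' direction, however, contains a directional error. Under the paper's definition a root is a vertex that is \emph{reached from} every vertex, not one that \emph{reaches} every vertex, so your parenthetical ``a root can reach everything'' is false (consider a star with all edges oriented toward the center). Consequently the walks you invoke \emph{from} $[h,v_r]$ to $[g,v_r]$ and to $[\1,v_r]$ need not exist, and the identities $h\cdot f(u_1)=g$ and $h\cdot f(u_2)=\1$ are not justified as written. The repair is mechanical: rootedness gives walks \emph{into} $[h,v_r]$ from $[g,v_r]$ and from $[\1,v_r]$, which by Lemma~\ref{lem:couldbeuseful}(1) yield closed walks $u_1,u_2\in W(v_r,v_r)$ with $g\cdot f(u_1)=h$ and $f(u_2)=h$; then $g=f(u_2)\cdot f(u_1)^{-1}=f\bigl(u_2\,u_1^{m-1}\bigr)\in G^*_r$, where $m$ is the order of $f(u_1)$ --- exactly your inversion trick, applied to $u_1$ instead of $u_2$. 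Note also that your relocation of the root into the fiber over $v_r$ is valid, but for the opposite reason than the one you state: any vertex reachable \emph{from} a root is again a root. With the directions straightened out, the argument is correct.
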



The remainder of the subsection is devoted to the proof of Theorem~\ref{thm:coveringgraph}. We first prove for the case where $\Gamma$ is strongly connected: 

\begin{lem}\label{lem:stronglyconnected}
Let $(\Gamma, \rho)$ be a strongly connected voltage graph, and $\ol \Gamma$ be the associated derived graph. Then,  each connected component $\ol \Gamma_i$, for $i = 1,\ldots, k$, of $\ol \Gamma$ is strongly connected. 
\end{lem}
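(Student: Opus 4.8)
The plan is to show that from any vertex $[g, v_a]$ of a connected component $\ol\Gamma_i$ there is a walk to any other vertex $[g', v_b]$ of $\ol\Gamma_i$. By the first part of Lemma~\ref{lem:couldbeuseful}, a walk from $[g, v_a]$ to $[g', v_b]$ in $\ol\Gamma$ exists if and only if there is a walk $w$ of $\Gamma$ from $v_a$ to $v_b$ with $g' = g\cdot f(w)$. So the task reduces entirely to a statement about net voltages of walks in $\Gamma$: given that $[g, v_a]$ and $[g', v_b]$ lie in the same component, I must produce a walk $w$ from $v_a$ to $v_b$ with prescribed net voltage $g^{-1} g'$.

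First I would record what ``same component'' buys us. Since $[g,v_a]$ and $[g',v_b]$ are connected by a \emph{semi-walk} in $\ol\Gamma$, Lemma~\ref{lem:couldbeuseful}(1) gives a semi-walk $w_0$ of $\Gamma$ from $v_a$ to $v_b$ with $g' = g\cdot f(w_0)$, i.e. $f(w_0) = g^{-1}g'$. Now I invoke Theorem~\ref{pro:walkissufficient}: because $\Gamma$ is strongly connected, $\net(v_a,v_b) = \net^*(v_a,v_b)$, so there exists a \emph{walk} $w^*$ from $v_a$ to $v_b$ with $f(w^*) = f(w_0) = g^{-1}g'$. Applying Lemma~\ref{lem:couldbeuseful}(1) in the ``walk'' direction, the lift of $w^*$ starting at $[g,v_a]$ is a walk in $\ol\Gamma$ ending at $[g\cdot f(w^*), v_b] = [g', v_b]$. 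Since $[g,v_a]$ and $[g',v_b]$ lie in the same component and that component contains the whole walk, both endpoints lie in $\ol\Gamma_i$, and we have produced a directed walk from one to the other inside $\ol\Gamma_i$. As $[g,v_a]$ and $[g',v_b]$ were arbitrary in $\ol\Gamma_i$, every ordered pair is joined by a directed walk, hence a directed path, so $\ol\Gamma_i$ is strongly connected.

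The main obstacle, and the place where strong connectivity of $\Gamma$ is genuinely used, is precisely the step that upgrades the connecting \emph{semi-walk} in the derived graph to a connecting \emph{walk}: a priori two vertices in the same component of $\ol\Gamma$ are joined only by an undirected (semi-walk) route, and traversing an edge of $\ol\Gamma$ backwards is not allowed when we want a directed walk. Theorem~\ref{pro:walkissufficient} is exactly the tool that removes backward edges while preserving net voltage, and it is the crux; everything else is bookkeeping with Lemma~\ref{lem:couldbeuseful}. One small point to handle cleanly is the degenerate case $v_a = v_b$ with $g = g'$, where the trivial walk works, but this is subsumed by taking $w^*$ trivial. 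I would also note in passing that since each $\ol\Gamma_i$ is a covering of the strongly connected $\Gamma$ with finitely many vertices, strong connectivity of the component could alternatively be phrased via Corollary~\ref{thm:propertiesforgstari}(1) ($G^*_i = G_i$), foreshadowing the general rooted case in Theorem~\ref{thm:coveringgraph}; but the direct argument above is self-contained.
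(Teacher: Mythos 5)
Your proposal is correct and follows essentially the same route as the paper: take two vertices in the same component, use Lemma~\ref{lem:couldbeuseful}(1) to extract a semi-walk of $\Gamma$ with the required net voltage, upgrade it to a walk via Theorem~\ref{pro:walkissufficient} (using strong connectivity of $\Gamma$), and lift back. Your identification of the semi-walk-to-walk upgrade as the crux matches the paper's argument exactly.
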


\begin{proof}
Let $[g', v_a]$ and $[g'', v_b]$ be two vertices of $\ol \Gamma_i$. It suffices to show that there is a walk of $\ol \Gamma_i$ from $[g', v_a]$ to $[g'',v_b]$. 
From the first part of Lemma~\ref{lem:couldbeuseful}, there is a semi-walk~$w$ from~$v_a$ to~$v_b$ such that $g'' = g' \cdot f(w) $. Since $\Gamma$ is strongly connected, from Theorem~\ref{pro:walkissufficient}, there is a walk~$w'$ from~$v_a$ to~$v_b$ such that
$f(w') = f(w)$, and hence $g'' = g' \cdot f(w')$. Appealing again to the first part of Lemma~\ref{lem:couldbeuseful}, we conclude that there is walk of $\ol \Gamma_i$ from $[g', v_a]$ to $[g'',v_b]$. 
\end{proof}


We now focus on the case where the voltage graph $(\Gamma, \rho)$ is only  rooted.  Denote by $V_r$ the set of roots of $\Gamma$. 
Let $(\Gamma_r, \rho_r)$ be the voltage graph induced by $V_r$---the digraph $\Gamma_r$ is a subgraph of $\Gamma$ induced by $V_r$ and the map $\rho_r$ is derived by restricting $\rho$ to $V_r$. 
Let $SW_r$ (resp. $W_r$) be the set of semi-walks (resp. walks) of $\Gamma_r$. Similarly, for vertices $v_i$ and $v_j$ of $\Gamma_r$, let $SW_r(v_i,v_j)$ (resp. $W_r(v_i,v_j)$) be the set of semi-walks (resp. walks) from $v_i$ to $v_j$.  
We state below some facts about the voltage graph $(\Gamma_r, \rho_r)$. 
First, let a subset of $\ol V$ be defined as follows:
$$
\ol V_r := \pi^{-1}(V_r) = \{ [g, v] \mid g\in G, v\in V_r\}. 
$$
 Let $\ol \Gamma_r = (\ol V_r, \ol E_r)$ be the subgraph of $\ol \Gamma$ induced by $\ol V_r$.  It then directly follows from Definition~\ref{def:coveringgraph} that $\ol \Gamma_r$ is  the derived graph of $(\Gamma_r, \rho_r)$.  
We further establish the following result:

\begin{lem}
The  local groups of $(\Gamma_r, \rho_r)$ are $\{G^*_i\}_{v_i\in V_r}$.
\end{lem}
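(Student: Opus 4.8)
The statement to prove is that the local groups of the induced voltage graph $(\Gamma_r, \rho_r)$ are exactly the directed local groups $\{G^*_i\}_{v_i\in V_r}$ of the original voltage graph $(\Gamma,\rho)$. The plan is to fix a root $v_i\in V_r$ and show that the local group of $(\Gamma_r,\rho_r)$ at $v_i$, namely $\{f(w)\mid w\in SW_r(v_i,v_i)\}$, coincides with $G^*_i = \{f(w)\mid w\in W(v_i,v_i)\}$. The key structural observation is that $\Gamma_r$, being the subgraph of $\Gamma$ induced by the set of roots, is strongly connected (this is noted right after the definition of induced subgraphs in Subsection II-A), so we can bring the earlier machinery — in particular Theorem~\ref{pro:walkissufficient} and Corollary~\ref{thm:propertiesforgstari} — to bear on $\Gamma_r$.

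First I would establish the inclusion ``local group of $(\Gamma_r,\rho_r)$ at $v_i$ $\subseteq G^*_i$.'' Take any closed semi-walk $w\in SW_r(v_i,v_i)$. Since $\Gamma_r$ is strongly connected, Theorem~\ref{pro:walkissufficient} applied to $(\Gamma_r,\rho_r)$ gives a \emph{walk} $w^*\in W_r(v_i,v_i)$ with $f(w^*) = f(w)$. But $W_r(v_i,v_i)\subseteq W(v_i,v_i)$ — every walk of the induced subgraph $\Gamma_r$ is in particular a walk of $\Gamma$ — so $f(w) = f(w^*)\in G^*_i$. For the reverse inclusion $G^*_i\subseteq{}$(local group of $(\Gamma_r,\rho_r)$ at $v_i$): take a closed walk $w\in W(v_i,v_i)$. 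This $w$ lives in $\Gamma$, not necessarily in $\Gamma_r$. However, every vertex $v$ visited by $w$ lies on a closed walk through the root $v_i$, hence $v$ is mutually reachable with $v_i$, hence $v$ is itself a root, so $v\in V_r$. Therefore all vertices and all edges of $w$ lie in the induced subgraph $\Gamma_r$, which means $w\in W_r(v_i,v_i)\subseteq SW_r(v_i,v_i)$, so $f(w)$ is in the local group of $(\Gamma_r,\rho_r)$ at $v_i$. Combining the two inclusions gives the claim at $v_i$, and since $v_i$ was an arbitrary root, the lemma follows.

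The one step that requires a little care — and which I expect to be the main (albeit modest) obstacle — is the argument that every vertex on a closed walk through a root is itself a root. The point is: if $w$ is a closed walk from $v_i$ to $v_i$ passing through $v$, then splitting $w$ at $v$ produces a walk from $v_i$ to $v$ and a walk from $v$ to $v_i$, so $v$ and $v_i$ are mutually reachable; since $v_i$ is a root, for any vertex $u$ of $\Gamma$ there is a path from $u$ to $v_i$, and composing with a walk (reducible to a path) from $v_i$ to $v$ shows $v$ is reachable from every $u$, i.e., $v$ is a root. This is the place where rootedness, as opposed to mere weak connectivity, is essential: for a merely rooted graph the set of walks from $v_i$ to $v_i$ genuinely stays inside $\Gamma_r$, whereas semi-walks need not, which is precisely why the local group of $(\Gamma_r,\rho_r)$ matches $G^*_i$ and not $G_i$.

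A clean way to present the reverse inclusion is actually to invoke Corollary~\ref{thm:propertiesforgstari}(1): once we know $\Gamma_r$ is strongly connected, that corollary (applied to $(\Gamma_r,\rho_r)$) says the local group of $(\Gamma_r,\rho_r)$ at $v_i$ equals its directed local group, i.e., $\{f(w)\mid w\in W_r(v_i,v_i)\}$; then it suffices to observe $W_r(v_i,v_i)=W(v_i,v_i)$, which is exactly the ``every vertex on a closed walk through a root is a root'' observation above. Either route works; I would go with whichever keeps the exposition shortest, probably citing Corollary~\ref{thm:propertiesforgstari} for the forward reduction and then spending one short paragraph on $W_r(v_i,v_i)=W(v_i,v_i)$.
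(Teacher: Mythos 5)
Your proposal is correct and follows essentially the same route as the paper: both arguments rest on the two facts that $W(v_i,v_i)=W_r(v_i,v_i)$ (because every vertex on a closed walk through a root is itself a root) and that Theorem~\ref{pro:walkissufficient} applied to the strongly connected $(\Gamma_r,\rho_r)$ lets semi-walks be replaced by walks. The only cosmetic difference is that you organize the argument as two inclusions (and justify root-membership via mutual reachability rather than the paper's ``an out-neighbor of a root is a root''), while the paper writes it as a chain of set equalities.
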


\begin{proof}
Let $v_i$ be a root of $\Gamma$; we first show that 
\begin{equation}\label{eq:wequalwr}
W(v_i,v_i) = W_{r}(v_i,v_i). 
\end{equation}
It suffices to show that each closed-walk $w$ in $W(v_i,v_i)$ is indeed in $W_r(v_i,v_i)$. This holds because (i) the starting vertex of $w$ is $v_i$, which is a root,  and (ii) an out-neighbor of a root is also a root. Thus, all the vertices in $w$ are roots of $\Gamma$, which implies that $w\in W_r(v_i,v_i)$. 
Following~\eqref{eq:wequalwr}, we obtain
\begin{equation}\label{eq:10:16pm}
G^*_i = \{f(w) \mid w\in W_r(v_i,v_i)\}.
\end{equation}
On the other hand, $(\Gamma_r, \rho_r)$ is strongly connected. We thus appeal to Theorem~\ref{pro:walkissufficient}, and obtain
\begin{equation}\label{eq:youarelate}
\{f(w) \mid w\in W_r(v_i,v_i)\} = \{f(w) \mid w\in SW_r(v_i,v_i)\}.   
\end{equation}
Combining~\eqref{eq:10:16pm} and~\eqref{eq:youarelate}, we conclude that for all $v_i\in V_r$, 
$$
G^*_i  = \{f(w) \mid w\in SW_r(v_i,v_i)\},
$$
and hence  $\{G^*_i\}_{v_i \in V_r}$ are the local groups of $(\Gamma_r, \rho_r)$.  
\end{proof}

Let $v_i$ be a root of $\Gamma$.  Since $G^*_i$ is a subgroup of $G$, we have that $|G^*_i|$ divides $|G|$. Now, let  
$$
k^* := |G| / |G^*_i|.    
$$  
Since two roots $v_i$ and $v_j$ of $\Gamma_r$ are mutually reachable, from Corollary~\ref{thm:propertiesforgstari}, $G^*_i$ and $G^*_j$ are related by conjugation. In particular, $|G^*_i| = |G^*_j|$, and hence the number $k^*$ does not depend on a particular choice of a root $v_i$ of $\Gamma$. We also note that $G^*_i$ is a subgraph of $G_i$; since $k = |G| / |G_i|$, we have that $k$ divides $k^*$, and moreover, $k = k^*$ if and only if $G^*_i = G_i$. For convenience, let $$l := k^* / k.$$ 
From Proposition~\ref{lem:connectedcomponentsofderivedgraph}, there are $k^*$  connected components of $\ol \Gamma_r$, labelled as $\ol \Gamma_{r,1},\ldots, \ol \Gamma_{r, k^*}$, any two of which are isomorphic.  Furthermore, since $(\Gamma_r, \rho_r)$ is strongly connected, from Lemma~\ref{lem:stronglyconnected}, each $\ol \Gamma_{r, i}$, for $i = 1,\ldots, k^*$, is strongly connected.  
On the other hand, $\ol \Gamma_{r}$ is a subgraph of $\ol \Gamma$ induced by $\ol V_r$. Since $\ol \Gamma$ has only $k$ isomorphic connected components, we have the following fact:  

\begin{lem}\label{lem:thelemmaofpartition}
There exists a partition of the index set $\{1,\ldots, k^*\}$:
\begin{equation*}\label{eq:partitionsomehow}
\{1,\ldots, k^*\} = \sqcup^k_{j = 1}\cal{I}_{j},
\end{equation*} 
such that the following hold:
\begin{enumerate}
\item $|\cal{I}_j| = l$ for all $j = 1,\ldots, k$.   
\item Each $\ol \Gamma_{r, i}$,  for $i\in \cal{I}_j$,  is  a subgraph of $\ol \Gamma_j$. 
\end{enumerate}
\end{lem}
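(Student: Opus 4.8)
The plan is to define the promised partition by recording, for each component of $\ol \Gamma_r$, which component of $\ol \Gamma$ contains it, and then to use the isomorphisms $\sigma_{ij}$ between the connected components of $\ol \Gamma$ furnished by Proposition~\ref{lem:connectedcomponentsofderivedgraph} to show that all blocks of this partition have the same size.

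First I would set up the map. Since $\ol \Gamma_r$ is the subgraph of $\ol \Gamma$ induced by $\ol V_r$, every semi-walk of $\ol \Gamma_{r,i}$ is also a semi-walk of $\ol \Gamma$; hence the weakly connected graph $\ol \Gamma_{r,i}$ lies in a unique connected component of $\ol \Gamma$. This defines a map $\phi\colon \{1,\ldots,k^*\} \to \{1,\ldots,k\}$ with $\ol \Gamma_{r,i}$ a subgraph of $\ol \Gamma_{\phi(i)}$, and we set $\cal{I}_j := \phi^{-1}(j)$. The sets $\cal{I}_1,\ldots,\cal{I}_k$ plainly partition $\{1,\ldots,k^*\}$, and part~2 of the lemma holds by construction. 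Moreover the $\ol \Gamma_{r,i}$ are the connected components of $\ol \Gamma_r$, hence pairwise vertex-disjoint with no $\ol \Gamma_r$-edge between two distinct ones, so for each $j$ the subgraph of $\ol \Gamma_r$ induced by $\ol V_j \cap \ol V_r$ is exactly $\bigsqcup_{i\in \cal{I}_j} \ol \Gamma_{r,i}$; in particular $|\cal{I}_j|$ equals the number of connected components of that induced subgraph.

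Next I would show $|\cal{I}_i| = |\cal{I}_j|$ for all $i,j$ by restricting $\sigma_{ij}$. Recall from Proposition~\ref{lem:connectedcomponentsofderivedgraph} that $\sigma_{ij}\colon [g,v]\mapsto [g_j\cdot g_i^{-1}\cdot g, v]$ restricts to a graph isomorphism $\ol \Gamma_i \to \ol \Gamma_j$. Since $\sigma_{ij}$ leaves the $\Gamma$-vertex coordinate $v$ untouched, $[g,v]\in \ol V_r \iff v\in V_r \iff \sigma_{ij}([g,v])\in \ol V_r$, so $\sigma_{ij}$ maps $\ol V_i\cap \ol V_r$ bijectively onto $\ol V_j\cap \ol V_r$. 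Because $\ol \Gamma_r$ is the subgraph of $\ol \Gamma$ induced by $\ol V_r$ and $\sigma_{ij}$ preserves adjacency in $\ol \Gamma$, its restriction is a graph isomorphism between the subgraph of $\ol \Gamma_r$ induced by $\ol V_i\cap \ol V_r$ and the one induced by $\ol V_j\cap \ol V_r$. Graph isomorphisms carry connected components bijectively onto connected components, so these two subgraphs have equally many components, i.e. $|\cal{I}_i|=|\cal{I}_j|$. Since the $k$ blocks have a common cardinality and partition a set of size $k^*$, that cardinality is $k^*/k = l$ (which also re-confirms $k\mid k^*$), proving part~1.

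I do not expect a serious obstacle here: the one place that needs care is checking that $\sigma_{ij}$ restricts well to the root-induced subgraphs, and this is painless precisely because $\sigma_{ij}$ fixes the $\Gamma$-vertex coordinate, so the property of being a root of $\Gamma$ is preserved verbatim. The alternative of computing $|\cal{I}_j|$ directly --- by tracking which left-cosets of $G^*_i$ in $G$ fuse into a single left-coset of $G_1$ --- would be more bookkeeping-heavy and is what the isomorphism argument is designed to avoid.
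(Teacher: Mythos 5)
Your proof is correct, but it takes a genuinely different route from the paper's. The paper fixes a root $v_1$, picks coset representatives $g^*_1,\ldots,g^*_{k^*}$ with $G=\bigsqcup_i (g^*_i\cdot G^*_1)$ and $[g^*_i,v_1]\in \ol V_{r,i}$, and then does exactly the coset bookkeeping you chose to avoid: since $G^*_1$ has index $l$ in $G_1$, each left-coset $g_j\cdot G_1$ contains exactly $l$ of the cosets $g^*_i\cdot G^*_1$, and part~2 of Lemma~\ref{lem:couldbeuseful} converts ``$g^*_i\in g_j\cdot G_1$'' into ``$[g^*_i,v_1]$ lies in $\ol\Gamma_j$.'' You instead define the partition tautologically via the map $\phi$ (which containment is automatic because each $\ol\Gamma_{r,i}$ is connected and the $\ol\Gamma_j$ are the components of $\ol\Gamma$), and then get the equal cardinalities by restricting the isomorphisms $\sigma_{ij}$ of Proposition~\ref{lem:connectedcomponentsofderivedgraph} to the root fibers; the key observation that $\sigma_{ij}$ fixes the $\Gamma$-coordinate, hence preserves $\ol V_r$ and therefore carries the induced subgraph on $\ol V_i\cap\ol V_r$ isomorphically onto the one on $\ol V_j\cap\ol V_r$, is sound, and counting components of these induced subgraphs gives $|\cal I_i|=|\cal I_j|$ and hence the common value $k^*/k=l$. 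What each approach buys: the paper's coset argument is more explicit (it tells you which indices land in which $\cal I_j$ in terms of coset representatives at $v_1$, which meshes with how the components were labelled), whereas yours is more structural, reuses the already-proved isomorphism result, and even re-derives $k\mid k^*$ rather than assuming it. Both are complete; I see no gap in yours.
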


\begin{proof}
Without loss of generality, we assume that $v_1$ is a root of $\Gamma$. Let $G_1$ (resp. $G^*_1)$ be the local group (resp. directed local group) at~$v_1$. 
Let $g^*_1, \ldots, g^*_{k^*}$ be chosen such that $G = \bigsqcup^{k^*}_{i = 1} (g^*_i\cdot G^*_1)$; we assume, without loss of generality,  that $[g^*_i,  v_1] $ is a vertex of $\ol \Gamma_{r,_i}$, for all $i = 1,\ldots, k^*$. It now suffices to show that for each subgraph $\ol\Gamma_j$, there exists a subset $\cal{I}_j\subset \{1,\ldots, k^*\}$, with $|\cal{I}_j| = l$,  such that the vertices $[g^*_i, v_1]$, for $i \in \cal{I}_j$, belong to the subgraph $\ol \Gamma_j$. Let $g_1,\ldots,g_k$ 
be chosen as in Proposition~\ref{lem:connectedcomponentsofderivedgraph}: we have $G = \bigsqcup^k_{i = 1}(g_i \cdot G_1)$ and each $[g_i, v_1]$, for $i = 1,\ldots, k$, is a vertex of $\ol \Gamma_i$.  Because $G^*_1$ is a subgroup of $G_1$, with~$l$ the index of $G^*_1$ in $G_1$. So, each $g^*_i\cdot G^*_1$, for $i = 1,\ldots, k^*$, is a subset of $g_j \cdot G_1$ for some $j = 1,\ldots, k$. This, in particular, implies that there exists a subset $\cal{I}_j$, with $|\cal{I}_j| = l$, such that $g^*_i \cdot G^*_1 \subset g_j \cdot G_1$ for all $i \in \cal{I}_j$. Since $$g^*_i \in g^*_i \cdot G^*_1 \subset g_j \cdot G_1,$$ we conclude, from the second part of Lemma~\ref{lem:couldbeuseful}, that all vertices $[g^*_i, v_1]$, for $i \in \cal{I}_j$, belong to the same subgraph $\ol \Gamma_j$, which completes the proof. 
\end{proof}

Following Lemma~\ref{lem:thelemmaofpartition}, we consider below two cases about the value of the integer~$l$:

\vspace{3pt}
\noindent
{\bf Case I}. We assume that $l > 1$, and prove that $\ol \Gamma_{j}$ is not rooted. Specifically, we prove the following fact: 

\begin{lem}\label{lem:feeluncomfortable}
Let $\ol \Gamma_{r,i}$ be a subgraph of $\ol \Gamma_j$. Let $[g, v]$ be a vertex of $\ol \Gamma_{r, i}$, and $[g',v']$ be a vertex of $\ol \Gamma_j$. Then, there is a walk from $[g,v]$ to $[g',v']$ if and only if $[g',v']$ is a vertex of $\ol \Gamma_{r,i}$. 
\end{lem}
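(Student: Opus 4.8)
The plan is to deduce both implications from the dictionary between walks in the derived graph and voltage-weighted walks in the base graph given by Lemma~\ref{lem:couldbeuseful}(1), combined with the strong connectivity of each component $\ol\Gamma_{r,i}$ of $\ol\Gamma_r$ established in Lemma~\ref{lem:stronglyconnected} and the fact (already recorded in the excerpt) that $\ol\Gamma_r$ is precisely the derived graph of $(\Gamma_r,\rho_r)$.

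For the ``if'' direction I would argue directly: if $[g',v']$ is a vertex of $\ol\Gamma_{r,i}$, then since $\ol\Gamma_{r,i}$ is strongly connected (Lemma~\ref{lem:stronglyconnected}), there is a walk from $[g,v]$ to $[g',v']$ lying entirely inside $\ol\Gamma_{r,i}$; as $\ol\Gamma_{r,i}$ is a subgraph of $\ol\Gamma_j$, this walk is a walk of $\ol\Gamma_j$.

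For the ``only if'' direction, suppose there is a walk from $[g,v]$ to $[g',v']$ in $\ol\Gamma_j$, hence in $\ol\Gamma$. By Lemma~\ref{lem:couldbeuseful}(1) this produces a walk $w$ of $\Gamma$ from $v$ to $v'$ with $g' = g\cdot f(w)$. Now I would use that $v$ is a root of $\Gamma$, since $[g,v]\in\ol V_r=\pi^{-1}(V_r)$, together with the observation (made just above Lemma~\ref{lem:thelemmaofpartition} in the excerpt) that an out-neighbor of a root is again a root; consequently every vertex traversed by $w$ is a root, so $v'\in V_r$ and $w$ is in fact a walk of the induced subgraph $\Gamma_r$. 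Applying Lemma~\ref{lem:couldbeuseful}(1) once more, this time to the voltage graph $(\Gamma_r,\rho_r)$ whose derived graph is $\ol\Gamma_r$, we obtain a walk from $[g,v]$ to $[g',v']$ lying inside $\ol\Gamma_r$. Therefore $[g,v]$ and $[g',v']$ belong to the same connected component of $\ol\Gamma_r$; since $[g,v]$ is a vertex of $\ol\Gamma_{r,i}$, so is $[g',v']$.

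I do not anticipate a genuine obstacle. The one point needing care is the claim that a walk of $\ol\Gamma$ starting at a vertex over a root never leaves $\ol V_r$ (so that it is automatically a walk of the induced subgraph $\ol\Gamma_r$), which follows from stability of the root set under passing to out-neighbors and from $\ol\Gamma_r$ being an induced subgraph of $\ol\Gamma$; both facts are available from earlier in the paper, so the argument is essentially a two-step application of Lemma~\ref{lem:couldbeuseful}(1).
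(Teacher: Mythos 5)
Your proposal is correct and follows essentially the same route as the paper's proof: the ``if'' direction via strong connectivity of $\ol \Gamma_{r,i}$ (Lemma~\ref{lem:stronglyconnected}), and the ``only if'' direction by translating the walk down to $\Gamma$ via Lemma~\ref{lem:couldbeuseful}(1), noting that out-neighbors of roots are roots so the walk stays in $\Gamma_r$, and lifting back to conclude that $[g',v']$ lies in the same component of $\ol\Gamma_r$ as $[g,v]$. The only cosmetic difference is that you argue directly where the paper argues by contradiction, and you spell out the second application of Lemma~\ref{lem:couldbeuseful}(1) to $(\Gamma_r,\rho_r)$ that the paper leaves implicit.
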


\begin{proof}
Since $\ol \Gamma_{r, i}$ is strongly connected,  if $[g',v']$ is a vertex of $\ol \Gamma_{r,i}$, then there is a walk from $[g, v]$ to $[g',v']$. We now show that if $[g', v']$ is not a vertex of $\ol \Gamma_{r, i}$, then there does not exist a walk from $[g, v]$ to $[g',v']$.  The proof is carried out by contradiction: we assume, to the contrary, that such walk from $[g,v]$ to $[g', v']$ exists. Then, from the first part of Lemma~\ref{lem:couldbeuseful}, there is a walk $w$ of $\Gamma$ from $v$ to $v'$, with $g' = g\cdot f(w)$. 
 Since $[g, v]$ is a vertex of $\ol \Gamma_{r,i}$, and hence of $\ol \Gamma_r$,  $v$ is a root of $\Gamma$. So, the existence of the walk $w$ implies that $v'$ is also a root of $\Gamma$, and hence $[g',v']$ is a vertex of $\ol \Gamma_{r}$. Furthermore, the two vertices $[g, v]$ and $[g',v']$ have to be in the same connected component of $\ol \Gamma_r$, and hence $[g', v']$ is a vertex of $\ol \Gamma_{r,i}$, which is a contradiction. This completes the proof. 
\end{proof}

Lemma~\ref{lem:feeluncomfortable} then implies the following fact: if $\ol \Gamma_j$ is rooted, then the root set of $\ol \Gamma_j$ {\it has to} be the vertex set of $\ol \Gamma_{r,i}$. Hence, if $l = |\cal{I}_j| > 1$,  then $\ol \Gamma_j$ cannot be rooted because otherwise, the root set of $\ol \Gamma_{j}$ has to coincide with the vertex set of each  $\ol \Gamma_{r,i}$, for $i\in \cal{I}_j$, which is a contradiction. We have thus proved that if $G^*_i$ is a proper subgraph of $G_i$ for some (and hence all) $v_i \in V_r$, then each connected component of $\ol \Gamma$ is only weakly connected, but not rooted. 

\vspace{3pt}
\noindent
{\bf Case II}. We now assume that  $l = 1$, and prove that $\ol \Gamma_j$ is rooted. In this case, since $k^* = k$, and hence $\cal{I}_j$ is a singleton, we can assume, without loss of generality, that each $\ol \Gamma_{r,j} = (\ol V_{r,j}, \ol E_{r, j})$, for $j = 1,\ldots, k$, is a subgraph of $\ol \Gamma_j$. We establish the following fact:

\begin{lem}\label{lem:feelmuchbetter}
If $l = 1$, then each $\ol \Gamma_j$, for $j = 1,\ldots, k$ is rooted, with $\ol V_{r,j}$ the root set. 
\end{lem}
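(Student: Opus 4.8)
The plan is to establish the two inclusions separately: first that every vertex of $\ol V_{r,j}$ is a root of $\ol \Gamma_j$, and then that $\ol \Gamma_j$ has no other roots. The hypothesis $l = 1$ enters at the outset. Since $l = k^*/k = 1$ we have $k^* = k$ and $G^*_1 = G_1$, and by Lemma~\ref{lem:thelemmaofpartition} each index set $\cal{I}_j$ is a singleton, so (after the relabeling fixed above) $\ol \Gamma_{r,j}$ is the \emph{unique} connected component of $\ol \Gamma_r$ that is a subgraph of $\ol \Gamma_j$. Because the vertex sets $\ol V_{r,1},\ldots,\ol V_{r,k}$ partition $\ol V_r$ and distinct components of $\ol \Gamma$ are disjoint, this yields the identity $\ol V_r \cap \ol V_j = \ol V_{r,j}$, which I will use twice below. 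I will also repeatedly invoke Lemma~\ref{lem:couldbeuseful}(1), which transports walks of $\ol \Gamma$ to walks of $\Gamma$ and back, together with the fact that a walk of $\ol \Gamma$ never leaves the connected component of its starting vertex.

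For the first inclusion I would fix an arbitrary $[g_0, v_0] \in \ol V_{r,j}$ (so $v_0 \in V_r$) and an arbitrary vertex $[g', v']$ of $\ol \Gamma_j$, and construct a walk of $\ol \Gamma_j$ from $[g', v']$ to $[g_0, v_0]$. Combining the rootedness of $\Gamma$ with the strong connectivity of $\Gamma_r$ gives a walk $w$ of $\Gamma$ from $v'$ to $v_0$ (first reach some root, then travel to $v_0$ inside $\Gamma_r$). Lifting $w$ by Lemma~\ref{lem:couldbeuseful}(1) produces a walk of $\ol \Gamma$ from $[g', v']$ to $[g'\cdot f(w), v_0]$; this walk stays in $\ol \Gamma_j$, so $[g'\cdot f(w), v_0] \in \ol V_j$, and since $v_0 \in V_r$ it lies in $\ol V_r \cap \ol V_j = \ol V_{r,j}$. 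Now $[g'\cdot f(w), v_0]$ and $[g_0, v_0]$ are both vertices of $\ol \Gamma_{r,j}$, which is strongly connected by Lemma~\ref{lem:stronglyconnected}; concatenating the lifted walk with a walk between these two vertices inside $\ol \Gamma_{r,j}$ gives the required walk. Since $\ol V_{r,j}$ is nonempty, this already shows $\ol \Gamma_j$ is rooted.

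For the reverse inclusion, suppose $[g', v']$ is a root of $\ol \Gamma_j$; I would pick any $[g_0, v_0] \in \ol V_{r,j}$ and use a walk of $\ol \Gamma_j$ from $[g_0, v_0]$ to $[g', v']$, which by Lemma~\ref{lem:couldbeuseful}(1) projects to a walk $w$ of $\Gamma$ from $v_0$ to $v'$. As $v_0$ is a root of $\Gamma$ and an out-neighbor of a root is again a root, every vertex on $w$ is a root; in particular $v' \in V_r$, so $[g', v'] \in \ol V_r \cap \ol V_j = \ol V_{r,j}$. This identifies the root set of $\ol \Gamma_j$ with $\ol V_{r,j}$. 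I expect the only real friction to be the bookkeeping of which connected component of $\ol \Gamma_r$ lies inside which component of $\ol \Gamma$ --- this is exactly where $l = 1$ is indispensable, and where the identity $\ol V_r \cap \ol V_j = \ol V_{r,j}$ comes from; everything after that is a routine combination of lifting walks and the strong connectivity of $\ol \Gamma_{r,j}$.
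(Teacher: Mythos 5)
Your proof is correct and follows essentially the same route as the paper: lift a walk of $\Gamma$ to a root via Lemma~\ref{lem:couldbeuseful}(1), use $l=1$ to conclude the lifted walk lands in $\ol\Gamma_{r,j}$, and invoke the strong connectivity of $\ol\Gamma_{r,j}$. The only cosmetic difference is that you re-derive inline (in your reverse-inclusion paragraph) the content of Lemma~\ref{lem:feeluncomfortable}, which the paper simply cites to pin down the root set.
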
   

\begin{proof}
From Lemma~\ref{lem:feeluncomfortable}, it suffices to show that for each vertex $[g, v]$ of $\ol \Gamma_j$, there is a walk of $\ol \Gamma_j$ from $[g, v]$ to a vertex of $\ol \Gamma_{r, j}$. Let $v_r$ be a root of $\Gamma$, and  $w$ be a walk from $v$ to $v_r$. Then, from the first part of Lemma~\ref{lem:couldbeuseful}, there is a walk of $\ol \Gamma_{j}$ from $[g, v]$ to  $[g \cdot f(w), v_r] $, which is a vertex of both $\ol\Gamma_j$ and $\ol \Gamma_{r}$. Using the fact that $\ol \Gamma_{r, j}$ is the only connected component of $\ol \Gamma_r$ that is contained in $\ol \Gamma_j$, we conclude that  $[g\cdot f(w), v_r] $ is a vertex of $\ol \Gamma_{r, j}$. 
This completes the proof. 
\end{proof}

Combining the results derived in the two cases above,  we  establish Theorem~\ref{thm:coveringgraph}.


\section{The $G$-Clustering Dynamics}


\subsection{Exponential convergence and the adapted partition}

In this section, we investigate the class of $G$-clustering dynamics, for $G$ a point group, and establish relevant properties associated with it.   
Let $(\Gamma, \rho)$ be a $G$-voltage graph, and let $$\theta_{ij}:= \rho(e_{ij}), \hspace{10pt} \forall\, e_{ij} \in E.$$  
We recall that 
a { $G$-clustering dynamics} of a configuration  $p = (x_1, \ldots, x_N)$ is described by the following equation: 
\begin{equation*}
 \dot x_i = \sum_{v_j\in \cal{N}^-(v_i) }a_{ij} \, (\theta_{ij} \, x_j - x_i), \hspace{10pt} \forall i = 1,\ldots, N, 
\end{equation*}
where the $a_{ij}$'s are positive constants. We first establish the following theorem: 


   


\begin{theorem}\label{thm:clustering}
Let  $(\Gamma, \rho)$ be a rooted voltage graph, with the voltage group $G$ a point group in dimension~$k$. 
Let $\{G_i\}_{v_i \in V}$ (resp. $\{G^*_i\}_{v_i \in V}$) be the local groups (resp. directed local groups) of $(\Gamma, \rho)$.  
Suppose that $G_i = G^*_i$ for some (and hence any) root $v_i$ of $\Gamma$; then, for any initial condition $p(0)\in P$, the trajectory $p(t)$, generated by system~\eqref{eq:clusteringdynamics}, converges exponentially fast to a configuration $p^* = (x^*_1, \ldots, x^*_n)$ which satisfies the following two properties:
\begin{enumerate}
\item For each $e_{ij} \in E$, we have $x^*_i = \theta_{ij}\,  x^*_j$. In particular, $\|x^*_1\| = \ldots = \|x^*_N\|$. 

\item For each $v_i\in V$, we have $\theta\,  x^*_i = x^*_i$  for all $\theta \in G_i$.  
\end{enumerate}\,
\end{theorem}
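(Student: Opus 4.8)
The plan is to lift the $G$-clustering dynamics~\eqref{eq:clusteringdynamics} on $(\Gamma,\rho)$ to an \emph{ordinary}, voltage-free linear consensus dynamics on the derived graph $\ol\Gamma$, and then to invoke Theorem~\ref{thm:coveringgraph} together with the classical fact that consensus over a rooted digraph converges exponentially. Concretely, I would equip $\ol\Gamma$ with edge weights by assigning to each lifted edge $[g,v_i]\to[g\cdot\theta_{ij},v_j]$ the weight $a_{ij}$ of $e_{ij}$, and consider on $\ol\Gamma$ the consensus dynamics
\begin{equation*}
\dot y_{[g,v_i]} \;=\; \sum_{e_{ij}\in E} a_{ij}\,\bigl(y_{[g\cdot\theta_{ij},\,v_j]} - y_{[g,v_i]}\bigr), \qquad y_{[g,v_i]}\in\R^k,
\end{equation*}
where I used $\mathcal N^-([g,v_i]) = \{[g\cdot\theta_{ij},v_j]\mid e_{ij}\in E\}$ from Definition~\ref{def:coveringgraph}. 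Since $\ol\Gamma$ has no edge joining two distinct connected components, this dynamics decouples across the components $\ol\Gamma_1,\ldots,\ol\Gamma_k$.

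The first step is to show that the linear subspace $\mathcal S := \{\,y : y_{[g,v_i]} = g\,x_i \text{ for some } (x_1,\ldots,x_N)\in P\,\}$ is invariant under this consensus dynamics, and that on $\mathcal S$ the dynamics is conjugate to the $G$-clustering dynamics via the injective linear map $\iota:(x_1,\ldots,x_N)\mapsto (g\,x_i)_{g\in G,\,v_i\in V}$ (injective because $x_i = y_{[\mathbf 1,v_i]}$). This is essentially a one-line computation: substituting $y_{[g,v_i]}=g\,x_i$ into the right-hand side above gives $g\sum_{e_{ij}}a_{ij}(\theta_{ij}x_j - x_i) = g\,\dot x_i$, which is again of the form required for membership in $\mathcal S$ and reproduces~\eqref{eq:clusteringdynamics}. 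Hence, for any $p(0)\in P$, the trajectory $p(t)$ of~\eqref{eq:clusteringdynamics} is recovered from the trajectory $y(t)$ of the consensus dynamics with $y(0)=\iota(p(0))$ through $y_{[g,v_i]}(t)=g\,x_i(t)$; in particular $x_i(t)=y_{[\mathbf 1,v_i]}(t)$.

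Next, since $G_i=G^*_i$ at a root $v_i$ by hypothesis, Theorem~\ref{thm:coveringgraph} guarantees that each component $\ol\Gamma_m$ is rooted. A voltage-free consensus dynamics over a rooted digraph converges exponentially to a single common value in $\R^k$ shared by all vertices of that digraph, so $y_{[g,v_i]}(t)\to c_m\in\R^k$ exponentially, where $c_m$ depends only on the component containing $[g,v_i]$. Setting $x_i^* := c_{m(i)}$, with $m(i)$ the index of the component of $[\mathbf 1,v_i]$, yields $p(t)\to p^* = (x_1^*,\ldots,x_N^*)$ exponentially. For property~1, each edge $e_{ij}$ lifts to an edge $[\mathbf 1,v_i]\to[\theta_{ij},v_j]$ of $\ol\Gamma$, so $[\mathbf 1,v_i]$ and $[\theta_{ij},v_j]$ lie in the same component; hence $x_i^* = \lim_{t\to\infty} y_{[\mathbf 1,v_i]}(t) = \lim_{t\to\infty} y_{[\theta_{ij},v_j]}(t) = \theta_{ij}\,x_j^*$, and orthogonality of $\theta_{ij}$ together with weak connectedness of $\Gamma$ forces $\|x_1^*\|=\cdots=\|x_N^*\|$. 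For property~2, given $\theta\in G_i$ I would pick $w\in SW(v_i,v_i)$ with $f(w)=\theta$; by Lemma~\ref{lem:couldbeuseful}(1) there is a semi-walk of $\ol\Gamma$ from $[\mathbf 1,v_i]$ to $[\mathbf 1\cdot f(w),v_i]=[\theta,v_i]$, so these two vertices share a component and $x_i^* = \lim_{t\to\infty} y_{[\mathbf 1,v_i]}(t) = \lim_{t\to\infty} y_{[\theta,v_i]}(t) = \theta\,x_i^*$.

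The main obstacle is not any single computation but getting the lifting bookkeeping exactly right --- the precise placement of group elements (left- versus right-multiplication, $g$ versus $g^{-1}$) and of edge orientations so that $\iota$ genuinely intertwines the two vector fields --- and then correctly invoking the orientation-sensitive statement that consensus over a digraph that is rooted in the paper's sense (every vertex has a path to a root) converges exponentially. The conceptual heart is recognizing that Theorem~\ref{thm:coveringgraph} is the exact bridge: the hypothesis $G_i=G^*_i$ is precisely what makes every component of $\ol\Gamma$ rooted, hence what makes the lifted consensus converge.
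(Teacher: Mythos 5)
Your proposal is correct and follows essentially the same route as the paper: lift to $y_{[g,v_i]}=g\,x_i$, observe that this is a standard consensus process on the derived graph $\ol\Gamma$, use the hypothesis $G_i=G^*_i$ via Theorem~\ref{thm:coveringgraph} to conclude each component is rooted, and then read off properties~1 and~2 from which vertices of $\ol\Gamma$ share a component. The paper's proof is exactly this argument (phrased as a direct substitution rather than an invariant-subspace conjugacy), citing the standard rooted-consensus convergence result for the lifted system.
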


\begin{Remark} Note that if $(\Gamma,\rho)$ is strongly connected, then from Theorem~\ref{thm:coveringgraph}, we have that $G_i = G^*_i$ for all $v_i \in V$.  
Hence, if  the $x_i$'s are scalars and $G = \{1,-1\}$ (and hence $(\Gamma, \rho)$ is a signed graph), then Theorem~\ref{thm:clustering} implies the following fact (Theorem~1 in~\cite{altafini2013consensus}): 
\begin{enumerate}
\item If the signed graph $(\Gamma, \rho)$ is structurally balanced, then for any initial condition $p(0)\in P$, the trajectory $p(t)$, generated by system~\eqref{eq:clusteringdynamics}, converges exponentially fast to a configuration $$p^*= (x^*_1,\ldots, x^*_N)\in P$$ with
$
|x^*_1| = \ldots = |x^*_N|$.
   
\item If the signed graph $(\Gamma, \rho)$ is structurally unbalanced, then for any initial condition $p(0)\in P$, the trajectory $p(t)$, generated by system~\eqref{eq:clusteringdynamics}, 
converges exponentially fast to $\0\in P$.  
\end{enumerate}
The first part directly follows from item~1 of Theorem~\ref{thm:clustering}. For the second part, since $(\Gamma, \rho)$ is structurally unbalanced, $G_i = G = \{1,-1\}$ for some (and hence any) $v_i \in V$. So, from item~2 of Theorem~\ref{thm:clustering}, we have $x^*_i = - x^*_i$, which implies that $x^*_i = 0$ for all $v_i \in V$.  
\end{Remark}

\begin{proof}[Proof of Theorem~\ref{thm:clustering}]
The proof relies on the  construction of an augmented consensus process: 
First, note that for a pair $(\theta, v_i)$ in $G\times V$, the dynamics of $\theta\, x_i$ is given by    
$$
 \theta \,\dot x_i = \sum_{v_j\in \cal{N}^-(v_i)} a_{ij}  ( \theta \,\theta_{ij}\, x_j - \theta\, x_i ).
$$
Hence, if we let $y_{[\theta, v_i]} := \theta \, x_i$,  
then, from Definition~\ref{def:coveringgraph},  the dynamics of $y_{[\theta\, v_i]}$, for $ [\theta, v_i] \in G\times V$, are given by
\begin{equation}\label{eq:augmentedsystem}
\dot y_{[\theta, v_i]} = \sum_{[\theta',v_j]} a_{ij} \( y_{[\theta', v_j]} - y_{[\theta, v_i]}  \),
\end{equation}
where the summation is over all out-neighbors of $[\theta, v_i]$.  
We thus recognize that system~\eqref{eq:augmentedsystem} is a standard consensus process, with the derived graph $\ol \Gamma = (\ol V, \ol E)$ of $(\Gamma, \rho)$ being the underlying network topology. 

Let $k$ be the index of $G_i$ in $G$. Label the connected components of $\ol \Gamma$ as $\ol \Gamma_j = (\ol V_j, \ol E_j)$, for $j = 1,\ldots, k$. Since $G_i = G^*_i$ for any root $v_i$ of $\Gamma$, we know from part~3 of Theorem~\ref{thm:coveringgraph} that each $\ol \Gamma_j$, for $j = 1,\ldots, k$, is rooted.    
Thus, given the initial conditions~$y_{[\theta, v]}(0)$, for $[\theta, v]\in \ol V$,  it is known from~\cite{moreau2004stability} that for each connected component $\ol \Gamma_j$,  there exists a point $y^*_j\in \R^k$ such that along the evolution of the dynamics~\eqref{eq:augmentedsystem}, we have
\begin{equation}\label{eq:consensuscondition}
\lim_{t\to \infty}y_{[\theta, v_i]}(t) = y^*_j, \hspace{10pt} \forall \, [\theta, v_i]\in \ol V_j,
\end{equation}
and the convergence is exponentially fast. The convergence of the $y$-system~\eqref{eq:augmentedsystem} implies the convergence of the $x$-system~\eqref{eq:clusteringdynamics}. Indeed, choose a vertex $v_i$ of $\Gamma$;  without loss of generality, we assume that $[I, v_i]$, for $I$ the identity matrix in $\O(k)$,  is a vertex of $\ol \Gamma_1$. Then, from the definition of $y_{[I, v_i]}$, $$\lim_{t\to \infty}y_{[I, v_i]}(t) = \lim_{t\to \infty} x_i(t) =  y^*_1.$$ 
We next show that 
$
x^*_i = \theta_{ij} \, x^*_j 
$ 
for any edge $e_{ij}$ of $\Gamma$.  
First, note that from Definition~\ref{def:coveringgraph}, each  $[\theta_{ij}, v_j]$, for $v_j\in \cal{N}^-(v_i)$ is an out-neighbor of $[I, v_i]$. Thus, from~\eqref{eq:consensuscondition}, we obtain
$$\lim_{t\to \infty}y_{[I, v_i]}(t) =  \lim_{t\to \infty}y_{[\theta_{ij}, v_j]}(t),$$
which implies that 
$
x^*_i = \theta_{ij} \, x^*_j
$.  
It thus follows that $\|x^*_i\| = \|x^*_j\|$ for any edge $e_{ij}$ of  $\Gamma$. Using the fact that $\Gamma$ is connected, we obtain
$
\|x^*_1\| = \ldots = \|x^*_N\| 
$. 

It remains to show that for each vertex  $v_i \in V$, we have
$
\theta\,  x^*_i = x^*_i$ for all $\theta \in G_i$. 
Because of~\eqref{eq:consensuscondition}, it suffices to show that the two vertices $[I,v_i]$ and $[\theta, v_i]$ of $\ol \Gamma$ belong to the same connected component. This holds because first, by the definition of a local group, there exists a closed semi-walk $w\in SW(v_i,v_i)$ such that $f(w) = \theta$; then, from the first part of Lemma~\ref{lem:couldbeuseful}, there is a semi-walk from 
$[I, v_i]$ to $[\theta, v_i]$ in $\ol \Gamma$.  
This completes the proof. 
\end{proof}

\begin{Remark}
We note here that this ``lifting approach" by lifting a $G$-clustering dynamics to the corresponding $y$-system~\eqref{eq:augmentedsystem} was first proposed by Hendrickx in~\cite{hendrickx2014lifting} for studying the Altafini's model, with $(\Gamma, \rho)$ a strongly connected signed graph.  The proof of Theorem~\ref{thm:clustering} thus generalizes this method so that the ``lifting approach'' can be now applied to a $G$-clustering dynamics, with the corresponding $G$-voltage graph $(\Gamma, \rho)$ satisfying the assumptions of Theorem~\ref{thm:clustering}.    
\end{Remark}

With Theorem~\ref{thm:clustering} at hand, we formalize below in a corollary the following fact: along the evolution of dynamics~\eqref{eq:clusteringdynamics}, the $N$ agents are partitioned into multiple  clusters, with each cluster of agents converging to the same point in $\R^k$. We first have the following definition:

\begin{Definition}[Adapted partition]\label{def:adaptedpartition}
Let $(\Gamma, \rho)$ be a weakly connected voltage graph, with $V$ the vertex set of $\Gamma$. A partition
$
V = \sqcup^m_{l = 1} V_l
$ 
is a {\bf ${(\Gamma, \rho)}$-adapted partition} is defined as follows: two vertices $v_i$ and $v_j$ belong to the same subset if there is a semi-walk $w$ of $\Gamma$ from $v_i$ to $v_j$ such that $f(w) = \1$. 
\end{Definition}

\noindent
Note that a $(\Gamma, \rho)$-adapted partition of $V$ is unique; indeed, the defining condition above establishes an equivalence relation on the set of vertices $V$. 

Recall that for two vertices $v_i$ and $v_j$,  $\net(v_i,v_j)$ is a subset of $G$ given by 
$\net(v_i,v_j) = \{ f(w) \mid w\in SW(v_i,v_j) \}$. Also, recall that 
$
\net(v_i,V) = \cup_{v_j \in V} \net(v_i,v_j)
$. 
We have shown in Proposition~\ref{pro:necessaryandsufficientcondfornond} that $\net(v_i,v_j) = G_i \cdot f(w)$ for $w$ a semi-walk from $v_i$ to $v_j$, and hence $|G_i|$ divides $|\net(v_i,V)|$. We further recall that a voltage graph $(\Gamma, \rho)$ is said to be nondegenerate if $\net(v_i,V) = G$. We now establish the following result as a corollary to Theorem~\ref{thm:clustering}: 

\begin{cor}\label{cor:clustering}
Let $(\Gamma, \rho)$ be a rooted voltage graph that satisfies the assumptions of Theorem~\ref{thm:clustering}. Let $V$ be the set of vertices of $\Gamma$, and $V = \sqcup^m_{l = 1} V_l$ be the  $(\Gamma, \rho)$-adapted partition, with $m$ the number subsets of the partition. Then, the following hold: 
\begin{enumerate}
\item Fix a vertex~$v_i$ of $\Gamma$; then, two vertices~$v_j$ and~$v_k$ belong to the same subset $V_l$, for some $l = 1,\ldots, m$, if and only if 
$$
\net(v_i,v_j) = \net(v_i,v_k).
$$ 
In particular, we have $$m= | \{\net(v_i,V)|/|G_i| \le |G|/|G_i|,$$ 
and the equality holds if and only if $(\Gamma, \rho)$ is nondegenerate. 
\item Let $p(t)$ be a trajectory generated by system~\eqref{eq:clusteringdynamics} that converges to $p^* = (x^*_1,\ldots, x^*_N) $. 
Then, 
$
x^*_i = x^*_j
$ if $v_i$ and $v_j$ belong to the same subset $V_l$ for some $l = 1,\ldots, m$.  
\end{enumerate}\,

\end{cor}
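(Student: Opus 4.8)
The plan is to derive Corollary~\ref{cor:clustering} directly from Theorem~\ref{thm:clustering} together with the structural facts about net voltages established in Proposition~\ref{pro:necessaryandsufficientcondfornond}. For part~1, the starting point is the description of $\net(v_i,v_j)$ as a right-coset of the local group $G_i$: by Proposition~\ref{pro:necessaryandsufficientcondfornond}, $\net(v_i,v_j) = G_i \cdot f(w)$ for any semi-walk $w$ from $v_i$ to $v_j$. I would first establish the stated equivalence $\net(v_i,v_j) = \net(v_i,v_k)$ iff $v_j$ and $v_k$ lie in the same block of the adapted partition. For the forward direction, suppose $\net(v_i,v_j) = \net(v_i,v_k)$. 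Pick semi-walks $w_{ij} \in SW(v_i,v_j)$ and $w_{ik} \in SW(v_i,v_k)$; since $f(w_{ij}) \in \net(v_i,v_j) = \net(v_i,v_k)$, there is some $w' \in SW(v_i,v_k)$ with $f(w') = f(w_{ij})$. Then the semi-walk $w_{ij}^{-1}\, w'$ runs from $v_j$ to $v_k$ and, by Lemma~\ref{lem:compatibility}, has net voltage $f(w_{ij})^{-1} \cdot f(w') = \1$, so $v_j$ and $v_k$ are in the same block. Conversely, if $v_j$ and $v_k$ are in the same block, there is a semi-walk $u$ from $v_j$ to $v_k$ with $f(u) = \1$; concatenating any $w_{ij} \in SW(v_i,v_j)$ with $u$ gives an element of $SW(v_i,v_k)$ with the same net voltage, and concatenating any $w_{ik} \in SW(v_i,v_k)$ with $u^{-1}$ does the symmetric job, so $\net(v_i,v_j) = \net(v_i,v_k)$.

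Given that equivalence, the count of blocks $m$ equals the number of distinct sets $\net(v_i,v_j)$ as $v_j$ ranges over $V$, i.e.\ the number of distinct right-cosets of $G_i$ appearing among $\{\net(v_i,v_j) : v_j \in V\}$. Since these all partition $\net(v_i,V)$ into right-cosets of $G_i$ of equal size $|G_i|$, we get $m = |\net(v_i,V)| / |G_i|$. Because $\net(v_i,V) \subseteq G$ and $|G_i|$ divides $|G|$, this is at most $|G|/|G_i|$, with equality exactly when $\net(v_i,V) = G$, which is the definition of nondegeneracy; this uses part~2 of Proposition~\ref{pro:necessaryandsufficientcondfornond} (the relation \eqref{eq:notveryhappybutok} that $\{\net(v_i,v_j)\} \subseteq G_i\backslash G$ with equality iff nondegenerate). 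One bookkeeping point: the statement fixes a vertex $v_i$, and I should note that $m$ itself does not depend on this choice, since $|\net(v_i,V)|$ is vertex-independent by Lemma~\ref{lem:nondegGgraph} and $|G_i|$ is vertex-independent by Lemma~\ref{lem:conjugation}.

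For part~2, I would invoke Theorem~\ref{thm:clustering}: since $(\Gamma,\rho)$ satisfies its assumptions, the trajectory converges to $p^* = (x^*_1,\ldots,x^*_N)$ with $x^*_a = \theta_{ab}\, x^*_b$ for every edge $e_{ab}$. Propagating this relation along a semi-walk $w$ from $v_a$ to $v_b$ yields $x^*_a = f(w)\, x^*_b$, which follows from the $G$-clustering relation edge-by-edge together with the definition of net voltage and the fact that for a reversed edge $e_{ba}$ one has $x^*_b = \theta_{ba}\,x^*_a$, i.e.\ $x^*_a = \theta_{ba}^{-1}\,x^*_b = \ol\rho_w(a)\, x^*_b$. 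Hence if $v_j$ and $v_k$ lie in the same block, there is a semi-walk $w$ from $v_j$ to $v_k$ with $f(w) = \1$, giving $x^*_j = f(w)\, x^*_k = x^*_k$, as claimed.

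The only genuinely delicate point is establishing $x^*_a = f(w)\, x^*_b$ along an arbitrary semi-walk $w$, since Theorem~\ref{thm:clustering} is phrased directly in terms of edges; I would handle this by a short induction on the length of $w$, treating forward and reversed edges separately and using that the $\theta_{ab}$ are orthogonal (so invertible) to justify $x^*_b = \theta_{ab}^{-1} x^*_a$ on a reversed edge. Everything else is a matter of assembling already-proven coset identities and the net-voltage compatibility of Lemma~\ref{lem:compatibility}, so I expect no substantial obstacle beyond this propagation lemma.
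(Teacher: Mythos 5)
Your proposal is correct and follows essentially the same route as the paper: the equivalence in part~1 via the coset identity $\net(v_i,v_j)=G_i\cdot f(w)$ from Proposition~\ref{pro:necessaryandsufficientcondfornond} and concatenation/inversion of semi-walks, and part~2 by propagating the edge relation of Theorem~\ref{thm:clustering} along a null-voltage semi-walk. The only difference is that you are more explicit than the paper about the coset-counting step for $m$ and about the induction needed to get $x^*_a=f(w)\,x^*_b$ along a semi-walk with reversed edges, both of which the paper asserts without detail.
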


\begin{proof}
We first establish part 1 of the corollary. 
Suppose that $v_j$ to $v_k$ belong to the same subset; then, there is a semi-walk $w_{jk}$ from $v_j$ to $v_k$ such that $f(w_{jk}) = \1$.  We now show that $\net(v_i,v_j) = \net(v_i,v_k)$. 
Choose a semi-walk $w_{ij}$ from $v_i$ to $v_j$; then, by concatenating $w_{ij}$ with $w_{jk}$, we obtain $w_{ik}: =w_{ij}w_{jk}$ as a semi-walk from $v_i$ to $v_k$. Appealing to Proposition~\ref{pro:necessaryandsufficientcondfornond}, we obtain
$$
\begin{array}{lllll}
\net(v_i,v_k)  & = & G_i \cdot f(w_{ik}) & = & G_i \cdot f(w_{ij}) \cdot f(w_{jk}) \\
& = & G_i \cdot f(w_{ij}) & = &  \net(v_i,v_j). 
\end{array}
$$
Conversely, suppose that $\net(v_i,v_j) = \net(v_i,v_k)$;  then, there exist two semi-walks $w_{ij}\in SW(v_i,v_j)$ and $w_{ik}\in SW(v_i,v_k)$ such that $f(w_{ij}) = f(w_{ik})$. Let $w_{jk}:= w^{-1}_{ij} w_{ik}$; then, $w_{jk}$ is a semi-walk from $v_j$ to $v_k$, and moreover, 
$$
f(w_{jk}) = f(w_{ij})^{-1} \cdot f(w_{ik}) = \1,
$$
which implies that $v_j$ and $v_k$ are in the same subset. 

The second part of the corollary directly follows from Theorem~\ref{thm:clustering}; indeed, from part~1 of Theorem~\ref{thm:clustering},  if $w$ is a semi-walk of $\Gamma$ from $v_i$ to $v_j$,  then 
$
x^*_i = f(w)\, x^*_j
$. In particular, if $v_i$ and $v_j$ belong to the same subset $V_l$ for some $l = 1,\ldots, m$, then we can choose $w$ such that $f(w) = \1$, and hence $x^*_i  = x^*_j$,  which completes the proof. 
\end{proof}

\subsection{Simulations}

In this subsection, we illustrate the $G$-clustering dynamics via two concrete examples. We consider a formation of~$8$ agents $x_1,\ldots, x_8\in \R^2$ that evolves according to a $G$-clustering dynamics~\eqref{eq:clusteringdynamics}. For simplicity, all the coefficients $a_{ij}$'s are set to be ones.  
The underlying network topology $\Gamma = (V, E)$ is a strongly connected digraph of~$8$ vertices, illustrated in Fig.~\ref{fig:stronglyconnected}.  
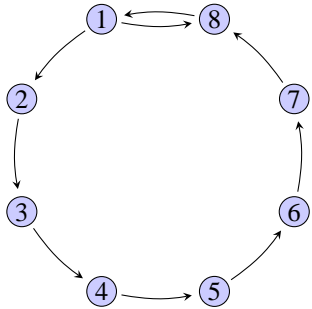
\begin{figure}[h]
\begin{center}
\begin{tikzpicture}[scale = .25, ->,>=stealth,shorten >=1pt,auto,node distance=1.5cm,inner sep=1pt,
  thin,main node/.style={circle,fill=blue!20,draw}]

  \node[main node] (1) {1};
  \node[main node] (8) [right of=1] {8};
  \node[main node] (7) [below right of=8] {7};
  \node[main node] (6) [below of=7] {6};
  \node[main node] (5) [below left of=6] {5};
  \node[main node] (4) [left  of=5] {4};
  \node[main node] (3) [above left  of=4] {3};
  \node[main node] (2) [above  of=3] {2};

  \path[every node/.style={font=\sffamily\small}]
    (1)      edge [bend right=10]  (2)
    (2)      edge [bend right=10]  (3)
    (3)      edge [bend right=10]  (4)
    (4)      edge [bend right=10]  (5)
    (5)      edge [bend right=10]  (6)
    (6)      edge [bend right=10]  (7)
    (7)      edge [bend right=10]  (8)
    (8)      edge [bend right=10]  (1)
    (1)      edge [bend right=10]  (8);
\end{tikzpicture}
\end{center}
\caption{A strongly connected digraph $\Gamma$ of $8$ vertices, with the arrows representing the edges of $\Gamma$.}
\label{fig:stronglyconnected}
\end{figure}

\vspace{3pt} 
\noindent
{\bf Goal}. 
 The goal here is to choose a point group $G$ in dimension~$2$, and to design a map $\rho: E\longrightarrow G$ such that along the dynamics of system~\eqref{eq:clusteringdynamics}  the~$8$ agents are partitioned into~$6$ clusters, and moreover, the associated clustering points form the vertices of a regular hexagon.   
 Specifically, we require that the following two conditions hold for the choices of the point group $G$ and of the map $\rho$: let $p(t) = (x_1(t), \ldots, x_8(t))$ be any trajectory of system~\eqref{eq:clusteringdynamics}; then, $p(t)$ converges to a configuration $p^* = (x^*_1,\ldots, x^*_8)$, with $x^*_i\in \R^2$,  such that the following condition is satisfied:
\begin{enumerate}
\item[A).] $x^*_1 =  \theta^{i - 1}_{\rt, 6} \, x^*_i$ for all $i = 1,\ldots, 8$, where we recall that $\theta_{\rt, 6}$ is a rotation matrix given by
$$\theta_{\rt, 6} = \begin{bmatrix}
\cos(\pi / 3) & -\sin(\pi /3) \\
\sin(\pi / 3) & \cos(\pi /3)
\end{bmatrix}.
$$  
\end{enumerate}\,
Note that from the relation above, we have $x^*_1 = x^*_7$ and $x^*_2 = x^*_8$. 

In the remainder of the subsection, we exhibit two point groups $G$ in dimension~$2$, and correspondingly two different maps $\rho: E\longrightarrow G$, such that the associated $G$-clustering dynamics achieve the goal above.  

\vspace{3pt}

{\bf Example 1}. Let $G$ be a point group isomorphic to $C_6$, i.e., the cyclic group of order~$6$; then, 
$
G = 
\left\langle 
\left\{ \theta_{\rt, 6}
\right\}
 \right\rangle$.  
 Let $(\Gamma,\rho)$ be a voltage graph, with the map $\rho: E \longrightarrow G$ defined as follows: 
\begin{enumerate} 
\item  Let 
$
\rho(e_{i,i+1}) := \theta_{\rt, 6}
$ for $i =1,\ldots, 7$;
\item Let $\rho(e_{8,1}) := \theta_{\rt, 6}^{-1}$;
\item Let $\rho(e_{1,8}) := \theta_{\rt, 6}$.
\end{enumerate}
Then, from Corollary~\ref{cor:stronglyconnectedgraph},  $(\Gamma, \rho)$ is {structurally balanced}. Moreover, a direct computation shows that $(\Gamma, \rho)$ is {nondegenerate}, and the $(\Gamma, \rho)$-adapted partition is given by
\begin{equation}\label{eq:adaptedpartition1}
V = \{1,7\} \cup \{2,8\} \cup \{3\} \cup \ldots \cup \{6\}.
\end{equation}
Let $p(t)$ be a trajectory generated by the $G$-clustering dynamics. Then, from Theorem~\ref{thm:clustering} and Corollary~\ref{cor:clustering}, $p(t)$ converges to a  configuration $p^*= (x^*_1,\ldots, x^*_8)$ which satisfies the condition~A). 

We illustrate the result, via simulation, in Fig.~\ref{fig:ggraph1}. In the simulation, we let the initial condition $p(0) = (x_1(0),\ldots, x_8(0))$ of system~\eqref{eq:clusteringdynamics} be randomly generated: each $x_i(0)$ is a random variable uniformly distributed over the square $[-1,1]\times [-1,1]$ in $\R^2$.  Fig.~\ref{fig:ggraph1} then shows how agents evolve over the plane and converge correspondingly to the vertices of a regular hexagon.

\begin{figure}[h]
\begin{center}
\includegraphics[width=8.5cm]{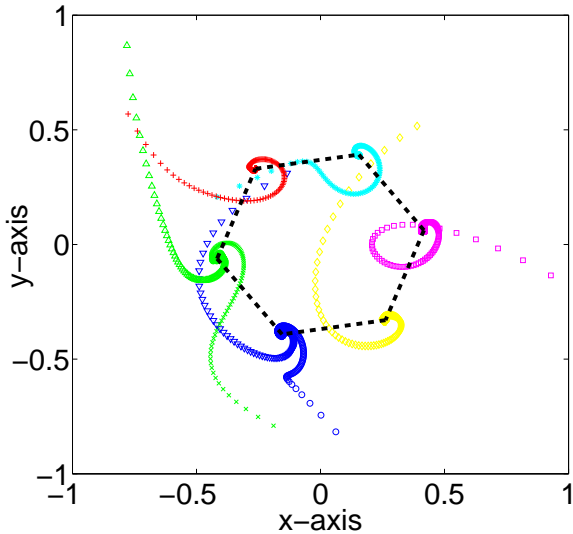}
\caption{Let $(\Gamma, \rho)$ be the $G$-graph defined in Example~1. This figure shows how the agents $x_1,\ldots, x_8$, with randomly chosen initial conditions, evolve along the $G$-clustering dynamics~\eqref{eq:clusteringdynamics}, and converge to the vertices of a regular hexagon. In particular, the two blue (resp. green) curves represent the trajectories of agents $x_1$ and $x_7$ (resp. $x_2$ and $x_8$). We thus see that the pair of agents $(x_1,x_7)$ converges to the same point, and so does the pair $(x_2, x_8)$.    
}\label{fig:ggraph1}
\end{center}
\end{figure}

\vspace{3pt}

{\bf Example 2}. Let $G$ be a point group isomorphic to $D_6$, i.e., the dihedral group of order~$12$. We recall that any such point group is generated by two elements: a rotation matrix $\theta_{\rt, 6}$ and 
$$
\theta_{\rf, v} = 2vv^\top /\|v\|^2 - I, \hspace{10pt} \mbox{for} \hspace{5pt} v \in \R^2 - \{0\} 
$$
which represents the reflection of the line $\{\alpha\, v\mid \alpha\in \R\}$ in $\R^2$. 
Let $(\Gamma, \rho)$ be a voltage graph, with $\rho: E\longrightarrow G$ given by
\begin{enumerate} 
\item Let 
$
\rho(e_{i,i+1}) := \theta_{\rt, 6}
$ for $i =1,\ldots, 7$;
\item Let $\rho(e_{8,1}) := \theta_{\rt, 6}^{-1}$;
\item Let $\rho(e_{1,8}) := \theta_{\rf, v} \theta_{\rt, 6}$.
\end{enumerate}
Note that in this case, the resulting voltage graph $(\Gamma,\rho)$ is structurally unbalanced, but nondegenerate. To see this, let 
$G_1$ be the local group of $(\Gamma, \rho)$ at the vertex~$v_1$. Then, a direct computation yields that $G_1 = \{I, \theta_{\rf, v}\}$, and moreover,  
\begin{equation}\label{eq:ontheflightbacktoshanghai}
\net(v_1, v_i) = G_1 \cdot \theta^{i - 1}_{\rt, 6}, \hspace{10pt} \forall\, i =1,\ldots 8. 
\end{equation}
In particular, $$\{\net(v_1,v_i) \mid i = 1,\ldots, 8 \} = G_1\backslash G,$$
and hence from Proposition~\ref{pro:necessaryandsufficientcondfornond}, the voltage graph $(\Gamma,\rho)$ is nondegenerate. 
Also, note that from~\eqref{eq:ontheflightbacktoshanghai}, 
$$\net(v_1, v_7) = \net(v_1,v_1) \hspace{10pt} \mbox{and} \hspace{10pt} \net(v_1, v_2) = \net(v_1, v_8),$$ 
and moreover, these are the only equalities among the sets $\net(v_1,v_i)$, for $i = 1,\ldots, 8$.  
So, from Corollary~\ref{cor:clustering}, the $(\Gamma, \rho)$-adapted partition of $V$  yields~\eqref{eq:adaptedpartition1}. 

Appealing again to Theorem~\ref{thm:clustering} and Corollary~\ref{cor:clustering}, we conclude that for a trajectory of system~\eqref{eq:clusteringdynamics} that converges to $p^*=(x^*_1,\ldots, x^*_8)$,  the condition~A) is satisfied. 
Furthermore, since $G_1 = \{I, \theta_{\rf,v}\}$,  from part~2 of Theorem~\ref{thm:clustering}, we have
$
\theta_{\rf, v} x^*_1 = x^*_1
$, 
which implies that $x^*_1$ can not be arbitrary, but rather lies on the line $\{\alpha \, v\mid \alpha \in \R\}$. 

We illustrate this result, via simulation, in Fig~\ref{fig:ggraph2}. In the simulation, the initial conditions  $x_i(0)$, for $i = 1,\ldots, 8$, are also randomly generated: each $x_i(0)$ is a  random variable uniformly distributed over $[-1,1]\times [-1,1]$ in $\R^2$.     
The nonzero vector  $v$ is chosen to be $(1,0) \in \R^2$, and hence the matrix $\theta_{\rf, v}$ represents the reflection of the $x$-axis. So, from the analysis, we have that $x^*_1$ lies  on the $x$-axis, which is confirmed by the simulation.  


\begin{figure}[h]
\begin{center}
\includegraphics[width=8.5cm]{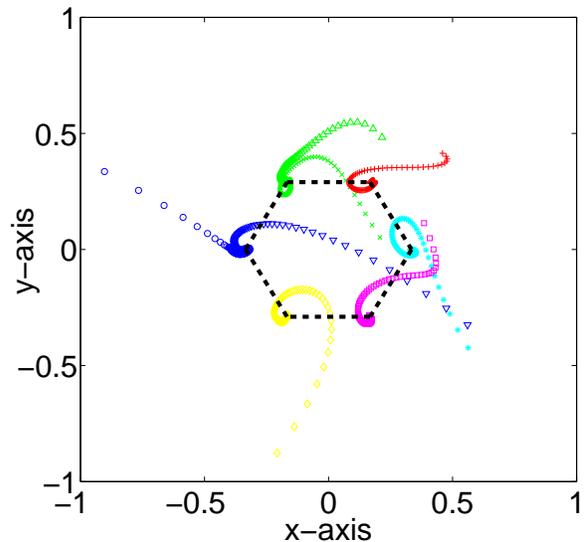}
\caption{Let $(\Gamma, \rho)$ be the $G$-graph defined in Example~2. This figure shows the convergence of the agents $x_1,\ldots, x_8$ to the vertices of a regular hexagon. The pair of agents $(x_1, x_7)$ converges to the same point, and so does the pair $(x_2, x_8)$. Let $x^*_1\in \R^2$ be the point to which the trajectory of agent $x_1$ converges. Then, we see from the figure that $x^*_1$ lies on the $x$-axis. This is a consequence of the fact that $\theta_{\rf,v} x^*_1  = x^*_1$, where $\theta_{\rf,v}$ represents the reflection of the $x$-axis. }\label{fig:ggraph2}
\end{center}
\end{figure}

\section{Conclusions}
A key aspect of modeling dynamics of agents in a large networked system is to design local interaction laws between the individual agents that can lead to some certain desired global behaviors of the ensemble system.  Constructing tractable and flexible models which capture this essential aspect of the network dynamics is a pressing open question. In this paper, we have presented a special class of cluster consensus dynamics, termed $G$-clustering dynamics for $G$ a point group, in which~$N$ autonomous agents interact  with their neighbors to form multiple clusters, with the clustering points satisfying a certain geometric symmetry induced by the associated point group. We have established in Theorem~\ref{thm:clustering} a necessary and sufficient condition for the convergence of a $G$-clustering dynamics. Furthermore, in Corollary~\ref{cor:clustering}, we have counted the number of the associated clusters, and labelled the agents that belong to the same cluster. Toward the analysis of a $G$-clustering dynamics, we have also investigated the underlying $G$-voltage graph and the associated derived graph $\ol \Gamma$ from the perspective of topological graph theory. In particular, we have established, in Subsections II-C, II-D,  and III-B respectively, results about directed local groups of a strongly connected voltage graph, about the existence of nondegenerate and structurally balanced voltage graphs, and about root connectivity of connected components of a derived graph. These results might be of independent interest in topological graph theory.

Future work may focus on the case where the underlying $G$-voltage graph is time-varying. Consider, for example, the map $\rho$ is now a map from $E$ to the power set $2^G$. In other words, each $\rho(e_{ij})$, for $e_{ij} \in E$, is now a subset of $G$. Let $\theta_{ij}(t) \in \rho(e_{ij})$, for $t\ge 0$, be piecewise constant; then, a time-varying $G$-clustering dynamics can be defined as follows: 
\begin{equation}\label{eq:controlsystem}
 \dot x_i = \sum_{v_j\in \cal{N}^-(v_i) }a_{ij} \, (\theta_{ij}(t) \, x_j - x_i), \hspace{10pt} \forall i = 1,\ldots, N, 
\end{equation}
which is a special switching linear system. Establishing stability criterion, such as computing the minimum dwelling time and etc., is in the scope of our future work. We further note that system~\eqref{eq:controlsystem} can be viewed as a bilinear control system if each agent $x_i$ is able to manipulate the values of  $\theta_{ij}(t)$, for $v_j \in \cal{N}^-(v_i)$.  Questions about reachability and controllability can be addressed there.  

 \bibliographystyle{unsrt}
 \bibliography{Ggraph}


\end{document}